	\setlist[itemize]{left= \parindent .. 2\parindent, label=\raisebox{0.14ex}{\scriptsize$\bullet$}} 
\newtheorem{theorem}{Theorem}[section]
\newtheorem{lemma}[theorem]{Lemma}
\newtheorem{prop}[theorem]{Proposition}
\newtheorem{cor}[theorem]{Corollary}
\theoremstyle{definition}
\theoremstyle{remark}
\newtheorem{remark}[theorem]{Remark}
\numberwithin{equation}{section}
\newcommand{\la}{\label}
\newcommand{\Res}{\mathtt{Res}}
\newcommand{\Ind}{\mathrm{Ind}}
\newcommand{\Mat}{\mathrm{Mat}}
\newcommand{\End}{\mathtt{End}}
\newcommand{\id}{\mathrm{id}}
\def\c{\mathbb{C}}
\def\Z{\mathbb{Z}}
\def\N{\mathbb{N}}
\def\A{\mathcal{A}}
\def\H{\mathcal{H}}
\def\rH{\widehat{\mathsf H}}
\def\Hsf{\mathsf{H}}
\def\Asf{\mathsf A}
\def\L{\mathsf L}
\def\cc{\mathrm{c}}
\def\pp{{\hat p}}
\def\Vc{V^{\vee}}
\def\Sc{W^\vee}
\def\D{\mathcal{D}}
\def\Wh{\widehat W}
\def\hw{\widehat {w}}
\def\hs{\widehat {s}}
\def\Ah{A_\hbar}
\def\BB{\mathtt B}
\def\g{g}
\title{\textit{R}-matrix Dunkl operators and \\ spin Calogero--Moser system}
\author{Oleg Chalykh}
\address{School of Mathematics, University of Leeds, Leeds LS2 9JT, UK}
\email{o.chalykh@leeds.ac.uk}
\author{Maria Matushko}
\address{Steklov Mathematical Institute of Russian
Academy of Sciences, Gubkina str. 8, 119991, Moscow, Russia}
\email{matushko@mi-ras.ru}
\begin{document}

\begin{abstract}
We construct a quantum integrable model which is an $R$-matrix generalization of the Calogero-Moser system, based on the Baxter--Belavin elliptic $R$-matrix. This is achieved by introducing $R$-matrix Dunkl operators so that commuting quantum spin Hamiltonians can be obtained from symmetric combinations of those. We construct quantum and classical $R$-matrix Lax pairs for these systems. In particular, we recover in a conceptual way the classical $R$-matrix Lax pair of Levin, Olshanetsky, and Zotov, as well as the quantum Lax pair found by Grekov and Zotov. Finally, using the freezing procedure, we construct commuting conserved charges for the associated quantum spin chain proposed by Sechin and Zotov, and introduce its integrable deformation. Our results remain valid when the Baxter--Belavin $R$-matrix is replaced by any of the trigonometric $R$-matrices found by Schedler and Polishchuk in their study of the associative Yang--Baxter equation.  
\end{abstract}

\maketitle

\section{Introduction}
The purpose of the present paper is to study an $R$-matrix generalisation of the quantum Calogero--Moser system. The (usual, scalar) Calogero--Moser system \cite{Ca71} is a celebrated example of a completely integrable system, with numerous connections to several areas of mathematics and physics. It describes a system of $n$ particles on the line, with positions denoted as $x_1, \dots, x_n$, whose pairwise interaction is governed, in the rational case, by the potential $u_{ij}=(x_i-x_j)^{-2}$; there are also trigonometric and elliptic versions, with $u_{ij}=\sin^{-2}(x_i-x_j)$ or $\wp(x_i-x_j)$, respectively. Adding spin interaction between the particles leads to the so-called spin Calogero--Moser systems \cite{GH, Woj85, HaHaldane92, Poly92,  HikamiWadati92, MP,BGHP,Hal}. 

The generalisation we are going to consider describes a version of the quantum spin Calogero--Moser system in which spin interaction is governed by an $R$-matrix, more specifically, the elliptic Baxter--Belavin $R$-matrix \cite{Baxter,Belavin}.
It originates in the work of Levin, Olshanetsky and Zotov \cite{LOZ} in which 
a classical Lax pair was found with such $R$-matrices appearing as matrix entries, and with the Lax equation describing the particle dynamics of the usual (scalar) Calogero--Moser system. In \cite{GSZ}, this was extended to a classical integrable model of interacting spinning tops. Additionally, a quantum analogue of the Lax pair \cite{LOZ} was found in \cite{GZ} and a quadratic $R$-matrix quantum Hamiltonian proposed, however, the question about its integrability remained open. In \cite{SZ}, Sechin and Zotov used the Lax pair \cite{LOZ} and some empirical method to propose a quantum long-range spin chain, and some further checks indicated its integrability. 
 All this suggested that a quantum-mechanical integrable system should exist which would be behind the $R$-matrix Lax pairs in \cite{LOZ},\cite{GZ} and the quantum spin chain in \cite{SZ}, and our goal is to construct such a system. 

 Our main idea is very simple: since the Calogero--Moser systems are best understood with the help of Dunkl operators \cite{Du}, we need to upgrade these operators to incorporate $R$-matrices. As it turns out, this idea is not new: in \cite{FoKi} (cf. \cite{K}), Fomin and Kirillov, in a more abstract setting motivated by Schubert calculus, looked for an associative algebra $\mathcal R$ with an action of the symmetric group $S_n$, admitting a commutative family of elements $\theta_i=\sum_{j (\ne i)}^n \! r_{ij} {s}_{ij}\in \mathcal R*S_n$, $i=1, \dots, n$\footnote{For another fruitful approach to incorporating $r$-matrices into the ($q$)KZ systems and Dunkl operators, see \cite{FR, Ch2}.}. Assuming $r_{ij}=-r_{ji}$ and covariance of $r_{ij}$, that is, $\sigma r_{ij}\sigma^{-1}=r_{\sigma(i)\sigma(j)}$ for any $\sigma\in S_n$, one easily finds that the elements $r_{ij}\in \mathcal R$ must satisfy the following quadratic relations: 
\begin{equation}\label{AYBintro}
r_{ij}r_{jk}+r_{ik}r_{ji}+r_{jk}r_{ki}=0.
\end{equation}
A particular solution in rational functions, $r_{ij}=g(x_i-x_j)^{-1}$, leads to commuting rational Dunkl operators $y_i:=\partial/\partial x_i-\theta_i$. Another possibility is to consider a single $r$-matrix $r\in\End(\c^d\otimes\c^d)$ and let $r_{ij}$ act on the $n$-fold tensor product $(\c^d)^{\otimes n}$ by applying $r$ to the $i$th and $j$th tensor factors. In that case, the relations \eqref{AYBintro} boil down to a single equation for $r$. This equation also appeared in the context of infinitesimal Hopf algebras \cite{A1, A2}. It became known as the (constant) \emph{associative Yang-Baxter equation} (AYBE) after Polishchuk who, in \cite{Pol1}, introduced the following more general version with spectral parameters: 
\begin{equation} \label{eq:AYBE}
    R_{12}(z,\mu) \, R_{23}(z',\mu') = R_{13}(z+z',\mu') \, R_{12}(z,\mu-\mu') + R_{23}(z',\mu'-\mu) \, R_{13}(z+z',\mu) \, .
\end{equation} 
Polishchuk observed its close relationship with both the classical and the quantum Yang--Baxter equations. This led to the classification of its elliptic \cite{Pol1} as well as trigonometric solutions \cite{S, Pol2}, under some additional constraints, with the most important solution given by the Baxter--Belavin $R$-matrix.  

On the other hand, in the scalar case $d=1$, the equation \eqref{eq:AYBE} coincides with the functional equation studied in \cite{BFV} as the condition of commutativity of elliptic Dunkl operators. In that case its general solution is given by the Kronecker function or its degenerations, see \eqref{Krphi} below and \cite{BFV, Pol2} for further details. Thus, all that is needed is replacing the Kronecker's function in the formula for the elliptic Dunkl operators from \cite{BFV} with a solution of the AYBE equation. This produces a family of matrix-valued Dunkl operators whose commutativity holds precisely due to \eqref{eq:AYBE}. 

Once we have such $R$-matrix Dunkl operators $y_i$ in place, we can use constructions known for the scalar case \cite{EFMV, C19, C24}, suitably modified. Thus, $n$ commuting quantum Hamiltonians are obtained from symmetric combinations of $y_i$ similarly to \cite{EFMV, C19}, quantum and classical Lax pairs are derived similarly to \cite{C19}, and the freezing procedure leading to a long-range quantum spin chain is carried out similarly to \cite{C24}. In particular, we recover the Lax pairs from \cite{LOZ},\cite{GZ} in a conceptual way (as well as obtain Lax pairs for the higher-order Hamiltonians), and construct conserved charges for the quantum spin chain from \cite{SZ}. Furthermore, in parallel with \cite{C24}, we construct an integrable deformation of that spin chain. Note that the conventional quantum spin Calogero--Moser system and the Inozemtsev spin chain \cite{I1}, considered in \cite{C24}, can be viewed as a particular case of our constructions, for a special choice of an $R$-matrix (see Remark \ref{rino}). Our constructions also work for the trigonometric solutions to the AYBE found in \cite{S, Pol2}.      

We should mention the $R$-matrix spin Ruijsenaars system and the related quantum spin chain recently constructed in \cite{MZ1, MZ}. Those constructions rely on explicit formulas for the Hamiltonians (inspired by the results of \cite{L18, LPS}) and certain non-trivial identities for the Baxter--Belavin $R$-matrix (generalising the elliptic identities from \cite{R}). The integrable system and the spin chain that we consider can be viewed as a non-relativistic $q\to 1$ limit of those in \cite{MZ1, MZ}. However, in the non-relativistic case no compact explicit general formulas for the Hamiltonians are known, so deriving our results from those in \cite{MZ1, MZ} in a rigorous fashion would be a complicated and ungratifying task. Besides, this would not recover the integrable deformation of the Sechin--Zotov spin chain that we found. Moreover, the approach presented here can be extended to the relativistic case, providing a more illuminating derivation of the models \cite{MZ1, MZ} that avoids complicated algebraic manipulations. This will be the subject of the forthcoming work \cite{CL}.    

Another interesting family of $R$-matrix spin Ruijsenaars models has been proposed recently in \cite{KL, KL1, KL2} based on Felder's dynamical $R$-matrix \cite{Fe1, Fe2}; see also \cite{ReSt22, ReSt20, Re23} where families of hyperbolic spin Calogero--Moser systems are constructed and studied, with Felder's trigonometric dynamical $r$-matrices making an appearance. Our constructions do not work in that case since Felder's $R$-matrix does not satisfy the AYBE. It would be interesting to extend our approach to allow more general $R$-matrices. 

Among other possibilities, let us mention Calogero--Moser systems for other root systems, see \cite{OP1,OP2} and \cite{Ch1} for the spin case. In that direction, a $BC_n$-version of the Lax pair from \cite{LOZ} has been found recently in \cite{MMZ}. It involves, beyond the Baxter--Belavin $R$-matrix, a suitable boundary matrix $K$ satisfying an associative analogue of the reflection equation. In the forthcoming work \cite{CM} we will extend the present approach to the $BC_n$ case, in particular, providing an alternative derivation of the results of \cite{MMZ} and a construction of the related spin chain.    

\medskip

The structure of the paper is as follows. In Section 2 we set up the notation, list the requirements for the $R$-matrix and describe the Baxter--Belavin matrix and its properties. In Section 3 we introduce $R$-matrix Dunkl operators and then use them in Section 4 to construct the $R$-matrix Calogero--Moser Hamiltonians. The central result here is Proposition \ref{main} whose proof is given in Section 5. In Section 6 we derive quantum and classical Lax pairs and calculate them explicitly for the quadratic Hamiltonian, recovering the classical and quantum Lax pairs from \cite{LOZ}, \cite{GZ}. In Section 7 we apply the freezing procedure to construct commuting charges for the Sechin--Zotov quantum spin chain, and in Section 8 we describe an integrable deformation of that spin chain. Finally, in Section 9 we explain how to adjust our constructions for the case of trigonometric solutions to the AYBE. 

\medskip

\noindent\textbf{Acknowledgement.} We would like to thank Anatoli Kirillov, Rob Klabbers, Jules Lamers, Andrii Liashyk, Nicolai Reshetikhin, Ivan Sechin, and Andrei Zotov for useful comments and discussions. We are especially grateful to Jules Lamers for his careful reading of an earlier draft and for suggesting numerous improvements to the presentation. This collaboration started when both authors visited Beijing Institute of Mathematical Sciences and Applications (BIMSA). We thank BIMSA for the hospitality and excellent working conditions.\\ The work by M.M. is supported by the Ministry of Science and Higher Education of the Russian Federation (agreement no. 075-15-2025-303) and by the Foundation for the Advancement of Theoretical Physics and Mathematics “BASIS”.
\section{Preliminaries}\label{sec: Notation and setup}

\subsection{}
Consider $n$ particles moving on a circle and carrying a spin. 
We will work over complex numbers, so all vector spaces, linear maps and tensor products are over $\c$.
Let $W=S_n$ be the symmetric group in its reflection representation $V=\c^n$.
We view $V$ as a complex Euclidean vector space with 
the standard orthonormal basis $(e_i)$ and coordinates $(x_i)$. {We think of the $x_i$ as the coordinates of the particles.} The symmetric group acts on $V$ by permuting the basis vectors. This induces a natural $W$-action on
the space $\c(V)$ of meromorphic functions on $V$ by $(w.f)(v)=f(w^{-1} . v)$ for $v\in V$, $w\in W$, and on the ring $\D(V)$ of differential operators on $V$ with meromorphic coefficients, with $w.\partial_\xi=\partial_{w.\xi}$ for $\xi\in V$, $w\in W$. In addition, consider the space $U = U_1\otimes\dots\otimes U_n$ with each $U_i\cong\c^d$ and with $W$ acting by permuting the tensor factors in $U$. The space $U$ models the spins of the $n$ particles.

We write $\D_U(V) \coloneqq \D(V)\otimes\End \, U$ for the ring of matrix{(-valued)} differential operators on $V$. Consider the `diagonal' action of $W$ on $\D_U(V)$, with $w\in W$ acting simultaneously on $\D_V$ and (by the adjoint action) on $\End\, U$.
We use a hat, $\widehat{\,\cdot\,}$, to distinguish it from other actions:
\begin{equation}\label{act}
    \hw . (a\otimes b) = w . a\otimes w \, b \, {w^{-1}}\,,
    \qquad a\in\D(V) \, ,\  b\in\End\, U \, , \quad w\in W \, .
\end{equation}
This defines the group $\Wh\simeq W$ acting by automorphisms of $\D_U(V)$, so we can
form the crossed product $\D_U(V)*\Wh$. As an algebra, $\D_U(V)*\Wh$ is generated by its two subalgebras $1\otimes\c \Wh$ and $\D_U(V)\otimes 1$, which can be identified with {the group algebra} $\c \Wh$ and $\D_U(V)$, respectively. Using these identifications, we simply write $a \, \hw$ for $a\otimes \hw$, so that each element of $\D_U(V)*\Wh$ is written uniquely as $a=\sum_{w\in W}a_w \, \hw$ with coefficients $a_w\in\D_U(V)$, and with the product described by
\begin{equation*}
    a \, b = \sum_{w, w'\in W} \!\! a_w \,\hw \; b_{w'}\,\hw'=\sum_{w, w'\in W} \! \bigl(a_w \, (\hw . b_{w'})\bigr)(\hw\, \hw')\,.
\end{equation*}
The action \eqref{act} extends to $\D_U(V)*\Wh$ by 
\begin{equation}\label{act1}
 \hw . (a\,\hw') =(\hw . a)(\hw \, \hw' \hw^{-1})\,,\qquad a\in\D_U(V)\,, \quad w,w'\in W.   
\end{equation}

\subsection{}
Let $e_{\alpha\beta} \in \End\, \c^d$, $1\leqslant \alpha,\beta \leqslant d$, denote the matrix units, so that $P = \sum_{\alpha,\beta=1}^d e_{\alpha\beta} \otimes e_{\beta\alpha}$ is the permutation operator on $\c^d \otimes \c^d$. Then the transposition $(ij)\in W$ acts on $U$ by $P_{ij} = \sum_{\alpha,\beta=1}^d 1^{\otimes (i-1)} \otimes e_{\alpha\beta} \otimes \!1^{\otimes (j-i-1)} \otimes e_{\beta\alpha} \otimes \!1^{\otimes (n-j)}$.

Consider an $R$-matrix $R\in \End(\c^d\otimes \c^d)$ depending on {spectral parameters} $z,\mu\in\c$ so that $R=R(z,\mu)$ is a meromorphic function $\c^2\to \End(\c^d\otimes \c^d)$, $(z,\mu)\mapsto R(z,\mu)$. {In our main example, $\mu$ plays the role of `crossing parameter'.}
We employ the standard `tensor-leg' notation $R_{ij}(z,\mu)$ for the endomorphism of $U$ that acts as $R(z,\mu)$ on $U_i\otimes U_j$ and as the identity on the other tensor factors of $U$. We have $R_{ji}(z,\mu) = P_{ij} \, R_{ij}(z,\mu) \, P_{ij}$.
We make the following assumptions on the \textit{R}-matrix.

\begin{enumerate}[label=(\roman*)]
    \item {\emph{Skew symmetry}:} $R(-z,-\mu) = -P \, R(z,\mu) \, P$. \label{it:skew}
     \item {\emph{Associative Yang--Baxter equation (AYBE) \eqref{eq:AYBE}}}\label{it:aybe}.%
    \item {\emph{Unitarity}:} \label{it:unitarity} $R(z,\mu) \, R_{21}(-z,\mu) = \bigl(\wp(\mu)-\wp(z)\bigr) \, \mathrm{Id}$, with $\wp$ the Weierstra{\ss} elliptic function or any of its degenerations.
    \item {\emph{Regularity}:} \label{it:regular} As a function of $\mu$, $R(z,\mu)$ has a first order pole at $\mu=0$, with residue $\mathrm{res}_{\mu=0} R(z,\mu)$ independent of $z$.
       \end{enumerate} 
The next property is a consequence of  \ref{it:skew}--\ref{it:unitarity}, see \cite[Sec.\ 4]{LOZ} and \cite[Theorem~1.4]{Pol2}, and explains the terminology ``$R$-matrix'':
       \begin{itemize}
\item[] \emph{Quantum Yang--Baxter equation (QYBE):} \label{it:ybe}
\begin{equation} \label{eq:YBE}
    R_{12}(z,\mu) \, R_{13}(z+z',\mu) \, R_{23}(z',\mu) = R_{23}(z',\mu) \, R_{13}(z+z',\mu) \, R_{12}(z,\mu) \, .
\end{equation}
    \end{itemize}
Note that, unlike \eqref{eq:YBE}, the relation \eqref{eq:AYBE} is highly sensitive to the normalisation of the $R$-matrix. This is the reason for the precise factor in \ref{it:unitarity}. 

Introduce additional ``spectral'' variables $\lambda=(\lambda_1, \dots, \lambda_n)$. For $i\ne j$ and $k\ne l$, we abbreviate $R_{ij}^{kl} \coloneqq R_{ij}(x_i - x_j,\lambda_k - \lambda_l)$ and further $R_{ij} \coloneqq R_{ij}^{ij}$. In this notation, we have 

 \begin{itemize}
\item[] \emph{Covariance property:} $\widehat{w} \, R^{kl}_{ij} \, \widehat{w}^{-1} = R_{w(i) \, w(j)}^{kl}$ for any $w\in W$. 
 \end{itemize}
%

%
Then \ref{it:skew}--\ref{it:aybe} imply that for $i\ne j$, $k\ne l$ and pairwise distinct $p,q,r$ we have
\begin{equation}\label{eq:AYBEi}
R^{kl}_{ij}=-R^{lk}_{ji} \,, \quad R_{pq}^{pq} \, R_{qr}^{pr} + R_{pr}^{pr} \, R_{qp}^{qr} + R_{qr}^{qr} \, R_{rp}^{qp}=0\,.
\end{equation}
It also easily follows from the definitions that 
\begin{equation}\label{eq:Rcommute}
[R^{kl}_{ij},R^{rs}_{pq}]=0\quad\text{if}\quad \{i,j\}\cap\{p,q\}=\varnothing\,.
\end{equation}

\subsection{}
For a simple example of such $R$, one can take $U$ the trivial representation (i.e.\ $d=1$) and $R$ equal to the Kronecker function: 
\begin{equation}\label{Krphi}
 R(z,\mu)=\phi(z,\mu)=\phi(z,\mu|\tau) \coloneqq \frac{\theta'(0) \, \theta(z+\mu)}{\theta(z)\,\theta(\mu)}\,,
\end{equation}
where
%
$\theta=\theta(z|\tau)$
is the first (odd) Jacobi theta-function.
The Kronecker function has all the properties \ref{it:skew}--\ref{it:regular}, where \ref{it:aybe} is nothing but Fay's trisecant identity, see \cite{Weil}. 
%
%
%

In case $d>1$ one can take, according to \cite{Pol1, LOZ}, the Baxter--Belavin elliptic $R$-matrix \cite{Belavin} or its degenerations. For $d=2$ this is Baxter's eight-vertex $R$-matrix \cite{Baxter}:
 \begin{equation*}\label{BaxterR}
 \begin{array}{c}
\displaystyle{
R(z,\mu)=\frac{1}{2}
}
\left(
 \begin{array}{cccc}
 \varphi_{00}+\varphi_{10} & 0 & 0 & \varphi_{01}-\varphi_{11}
 \\
 0 & \varphi_{00}-\varphi_{10} & \varphi_{01}+\varphi_{11} & 0
  \\
 0 & \varphi_{01}+\varphi_{11} & \varphi_{00}-\varphi_{10} & 0
 \\
 \varphi_{01}-\varphi_{11} & 0 & 0 & \varphi_{00}+\varphi_{10}
 \end{array}
 \right)\,,
  \end{array}
 \end{equation*}
 where
 $
 \varphi_{00}=\phi(z,\frac{\mu}{2})\,,\,
 \varphi_{10}=\phi(z,\frac{1}{2}+\frac{\mu}{2})\,,\,
 \varphi_{01}=e^{\pi\mathrm{i} z}\phi(z,\frac{\tau}{2}+\frac{\mu}{2})\,,\,
 \varphi_{11}=e^{\pi\mathrm{i} z}\phi(z,\frac{1+\tau}{2}+\frac{\mu}{2})\,.
 $

\medskip
 
In the general case, the Baxter--Belavin $R$-matrix is defined as
%
%
 \begin{equation}\label{bb1}
 R(z,\mu)= \sum_{\genfrac{}{}{0pt}{}{\alpha,\beta,\gamma,\delta=1,\dots, d}{\alpha+\gamma\equiv\beta+\delta\ (\mathrm{mod}~d)}} \!\!\!\!\!\!\!\!\!\! R_{\alpha\beta,\gamma\delta}(z,\mu)\, e_{\alpha\beta}\otimes e_{\gamma\delta}\,,
 \end{equation}
where
 \begin{align}
 R_{\alpha\beta,\gamma\delta}(z,\mu)=&\exp\Bigl(\frac{2\pi \mathrm{i}}{d} \bigl((\gamma-\beta)(\beta-\alpha)\,\tau+(\gamma-\beta)\mu+(\beta-\alpha)\,z \bigr)\Bigr)\nonumber\\\label{bb2}
 &\times\phi(z+(\gamma-\beta)\,\tau,\mu+(\beta-\alpha)\,\tau\,|\,d\,\tau)\,.
 \end{align}

\subsection{}
The Baxter--Belavin $R$-matrix has the following 
symmetries:
\begin{align}
R(z,\mu)&=R(\mu,z)\,P\,, \label{eq:R_symmetry}\\
[Q\otimes Q\,, R(z,\mu)]&=[\Lambda\otimes \Lambda\,, R(z,\mu)]=0\,.
\end{align}
Its behaviour under the shifts by the periods $1,\tau$ is as follows:
 \begin{gather}\label{trans}
 R(z,\mu+1)=(Q^{-1}\otimes \mathrm{Id})\,R(z,\mu)\,(\mathrm{Id}\otimes  Q)\,,
 \\
 R(z,\mu+\tau)=\exp\left(-\frac{2\pi\mathrm{i}z}{d}\right)\,(\Lambda^{-1}\otimes \mathrm{Id})\,R(z,\mu)\,(
 \mathrm{Id}\otimes\Lambda)\,,
 \end{gather}
where $Q,\Lambda\in \End\, \c^d$ with 
\begin{equation}\label{QL}
 \displaystyle{
 Q_{kl}=\delta_{kl}\exp\left(\frac{2\pi
 \mathrm{i}k}{{ d}}\right)\,,\quad
 \Lambda_{kl}=\delta_{k-l+1\equiv 0\,({\hbox{\tiny{mod}}}\, d)}}\,
,\qquad k,l=1,\dots, d.   
\end{equation}
Similar periodicity properties in the first argument of the $R$-matrix follow from \eqref{eq:R_symmetry}. 
Also, $R$ admits the following expansion at $\mu=0$:
 \begin{equation}\label{expR}
 R(z,\mu)=\frac{1}{\mu} \, \mathrm{Id}+r(z)+\mu\, m(z)+O(\mu^2)\,.
 \end{equation}
Here $r(z)\in \End(\c^d\otimes \c^d)$ is the classical Belavin--Drinfeld elliptic $r$-matrix \cite{BD} %
 %
 %
 which satisfies the classical Yang--Baxter equation (CYBE):
 \begin{equation}\label{YBcl}
 \begin{array}{c}
    \displaystyle{
    [r_{12},r_{23}]+[r_{12},r_{13}]+[r_{13},r_{23}]=0\,,
    \qquad r_{ij}=r_{ij}(z_i-z_j)\,.
    }
\end{array}
\end{equation}
The residue of $R(z,\mu)$ (and of
$r(z)$) at $z=0$ is the permutation operator $P$.
%
%
The skew-symmetry of $R$ implies
 \begin{equation}\label{r056}
 \begin{array}{c}
    \displaystyle{
    r(z)=-r_{21}(-z)\,,\qquad
    m(z)=m_{21}(-z)\,,\qquad
    r'(z)=r'_{21}(-z)\,,
    }
\end{array}
\end{equation}
where $r'(z)=\partial_z \, r(z)$.
From the unitarity property we find $m(z)=\frac{1}{2} \bigl(r^2(z)- \wp(z) \, \mathrm{Id} \bigr) $.

 %

\begin{remark}
The AYBE equation \eqref{eq:AYBE} was introduced by Polishchuk \cite{Pol1} in relation with triple Massey products in certain $A_\infty$-categories of geometric origin. He also found all of its nondegenerate elliptic solutions, which include the Baxter--Belavin matrix \eqref{bb1}--\eqref{bb2}, see \cite[Sec.~2]{Pol1}. A classification of nondegenerate trigonometric solutions was obtained in \cite{S, Pol2}. The important role of the associative Yang--Baxter equation in the theory of integrable systems was emphasised and explored in \cite{LOZ, K, KZ, SZ, GSZ, MZ1, MZ2}.

\end{remark}


\remark\label{rino}{For our main example, i.e.\ the Baxter--Belavin $R$-matrix, the regularity property is complemented by the fact that the residue of $R$ at $\mu=0$ is the identity matrix, see \eqref{expR}. This will not be important for our constructions, but will only affect the explicit form of the resulting Hamiltonians. For a different example, one can choose $R(z,\mu) = \phi(z,\mu) \, P$, which satisfies properties \ref{it:skew}-\ref{it:unitarity}, but have the permutation matrix $P$ as its residue at both $\mu=0$ and $z=0$. Such a choice leads to the spin Calogero--Moser system and the Inozemtsev spin chain, treated in \cite{C24}.}

\remark{Our results remain valid for all non-degenerate trigonometric solutions of the associative Yang--Baxter equation, classified in \cite{S,Pol2}, as well as for the supersymmetric $R$-matrices from \cite{MZ2}, see Sec.~\ref{trigcase}}.



\section{{\it R}-matrix Dunkl operators}

\noindent
\subsection{}
Let $R(z,\mu)$ be an $R$-matrix satisfying the assumptions of Section \ref{sec: Notation and setup}. Given a coupling parameter $\g \in \c$, we define the (quantum)
\emph{$R$-matrix Dunkl operators} $y_1, \dots, y_n$
as
\begin{equation}\label{RD}
    y_i=\hat p_i  - \g\sum_{j (\ne i)}^n \! R_{ij} \, \widehat{s}_{ij} \,, \qquad  \, \hat p_i \coloneqq \hbar \, \frac{\partial}{\partial x_i} \, . 
\end{equation}
To emphasise that these are $\lambda$-dependent elements of
$\D_U(V)* \Wh$
we may write $y_i=y_i(\lambda)$.
For $\xi\in V$, we set $y_\xi = \sum_{i=1}^n \xi_i \, y_i$. The main properties of these Dunkl operators are given in the following lemma.

\begin{lemma}\label{lemma1}
The elements $y_\xi=y_\xi(\lambda)$ of $\D_U(V)* \Wh$ have the following properties.
\begin{enumerate}
    \item Commutativity: $[y_\xi, y_{\eta}]=0$ for any $\xi, \eta\in V$.
    \item Equivariance: $\hw \, y_\xi(\lambda) \, \hw^{-1} = y_{w{.}\xi}(w {.} \lambda)$.
\end{enumerate}
\end{lemma}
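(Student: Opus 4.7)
The plan is to reduce each claim to the algebraic properties of $R$ listed in Section~\ref{sec: Notation and setup}. Equivariance follows directly from the covariance of $R$, while commutativity reduces, after a careful bookkeeping in the crossed product $\D_U(V)*\Wh$, to the AYBE.

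\emph{Equivariance.} Compute $\hw \, y_i(\lambda)\, \hw^{-1}$ term by term: the first term produces $\pp_{w(i)}$ because $\hw \, \pp_i\, \hw^{-1} = \pp_{w(i)}$; for the second, push $\hw$ across using $\hw \, R_{ik}\, \hs_{ik}\, \hw^{-1} = (\hw . R_{ik}^{ik})\, \hs_{w(i)w(k)}$ and apply the covariance property, so that $\hw . R_{ik}^{ik} = R_{w(i)w(k)}^{ik}$ (the upper indices labelling $\lambda$'s are inert under $\hw$). The reindexing $l = w(k)$ then identifies the result with $y_{w(i)}(w.\lambda)$, since $R_{w(i)l}^{ik}$ at the original $\lambda$ equals $R_{w(i)l}^{w(i)l}$ evaluated at $w.\lambda$.

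\emph{Commutativity.} It suffices to show $[y_i, y_j] = 0$ for $i \ne j$. Expanding $y_iy_j - y_jy_i$ in $\D_U(V)*\Wh$, pushing all group elements to the right via $\hw \, a = (\hw . a)\, \hw$, and sorting contributions by the Weyl element $\hw$ that appears, the only possibilities are the identity, a single transposition, a disjoint double transposition, or a 3-cycle, and each coefficient must vanish separately. The identity coefficient is $[\pp_i, \pp_j] = 0$. A single transposition $\hs_{pq}$ with $\{p,q\}\not\subseteq\{i,j\}$ contributes a coefficient proportional to $\partial_m R_{kl}$ for some $m \notin \{k,l\}$, hence zero; the transposition $\hs_{ij}$ itself receives the coefficient $-\g(R_{ij}+R_{ji})(\pp_i-\pp_j)-\g\hbar(\partial_i R_{ji}-\partial_j R_{ij})$, which vanishes by skew symmetry $R_{ij}+R_{ji}=0$ and its derivative. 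A disjoint double transposition $\hs_{ik}\hs_{jl}$ collects to $\g^2\,[R_{ik}, R_{jl}]\,\hs_{ik}\hs_{jl}$, zero by \eqref{eq:Rcommute}.

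The 3-cycle case is the essential one. For the 3-cycle $(ijk)$, three cubic pairings contribute: $\hs_{ij}\hs_{jk}$ and $\hs_{ik}\hs_{ji}$ from $y_iy_j$, and $\hs_{jk}\hs_{ik}$ from $y_jy_i$. Using covariance $\hs_{ab}.R_{cd}^{cd}=R_{s_{ab}(c)\,s_{ab}(d)}^{cd}$ to move $\hs$'s past $R$'s, the accumulated coefficient equals $\g^2(R_{ij}R_{ik}^{jk}+R_{ik}R_{jk}^{ji}-R_{jk}R_{ij}^{ik})$, which vanishes by the AYBE \eqref{eq:AYBEi} specialised to $(p,q,r)=(k,j,i)$ after rewriting the three factors by skew symmetry; the opposite 3-cycle $(ikj)$ is handled analogously by another specialisation of \eqref{eq:AYBEi}. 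The main obstacle is precisely this 3-cycle bookkeeping: enumerating the relevant ordered pairs of transpositions, carefully tracking the covariance action on each $R$-factor (keeping the $\lambda$-indices inert), and recognising the resulting sum as the correct instance of the AYBE.
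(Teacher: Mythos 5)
Your strategy is the same as the paper's: reduce to $[y_i,y_j]=0$, expand in the crossed product, sort the terms by the group element that appears, and kill each coefficient using skew-symmetry, the locality relation \eqref{eq:Rcommute}, and the AYBE \eqref{eq:AYBEi}. Your identification of the three products of transpositions contributing to a given $3$-cycle, and the resulting coefficient $\g^2(R_{ij}R_{ik}^{jk}+R_{ik}R_{jk}^{ji}-R_{jk}R_{ij}^{ik})$ vanishing via the AYBE after rewriting by skew-symmetry, is correct and matches the paper's computation (the paper organises the same sum by the words $\hs_{ij}\hs_{ik}$ and $\hs_{ij}\hs_{jk}$). The equivariance argument is also fine; the paper simply declares it obvious.

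One case is missing from your bookkeeping. At order $\g^2$ the pair of terms $R_{ij}\hs_{ij}\cdot R_{ji}\hs_{ji}$ from $y_iy_j$ and $R_{ji}\hs_{ji}\cdot R_{ij}\hs_{ij}$ from $y_jy_i$ lands on the \emph{identity} group element, since $\hs_{ij}\hs_{ji}=\id$; so the identity coefficient is not just $[\pp_i,\pp_j]$. Pushing the transpositions to the right, these two products equal $R_{ij}^{ij}R_{ij}^{ji}$ and $R_{ji}^{ji}R_{ji}^{ij}$ respectively, which coincide after two applications of the skew-symmetry $R_{ab}^{cd}=-R_{ba}^{dc}$, so the extra contribution cancels; the paper records exactly this as the case $j=k,\ l=i$ in its table of commutators $[R_{ij}\hs_{ij},R_{kl}\hs_{kl}]$. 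The omission is harmless but needs to be filled in for the case analysis to be exhaustive.
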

\begin{proof} Equivariance is obvious.
    For the commutativity, we need to check that $[y_i, y_k]=0$ for $i\neq k$. The coefficient at $\hbar\,{\g}$ vanishes due to
    \begin{align*}
   \biggl[\frac{\partial}{\partial x_i},\sum_{l (\ne k)}^n \! R_{kl} \, \widehat{s}_{kl} \biggr] + \biggl[\sum_{j (\ne i)}^n \! R_{ij} \, \widehat{s}_{ij},\frac{\partial}{\partial x_k}\biggr] = \biggl[\frac{\partial}{\partial x_i},R_{ki} \, \widehat{s}_{ik}\biggr] + \biggl[R_{ik} \, \widehat{s}_{ik},\frac{\partial}{\partial x_k}\biggr] =0.
    \end{align*}
To calculate the coefficient at ${\g}^2$, we use that for $i\neq j,\, k\neq l,\,i\neq k$
\begin{equation*}
 [ R_{ij} \, \widehat{s}_{ij},  R_{kl} \, \widehat{s}_{kl}]  =\begin{cases}
0&\text{for}\ \{i,j\}\cap\{k,l\}=\varnothing \,,\\
0&\text{for}\ j=k,\, l=i\,,\\
R_{ik} \, R_{il}^{kl} \, \widehat{s}_{ik} \, \widehat{s}_{kl}-R_{kl} \, R_{il}^{ik} \, \widehat{s}_{kl} \, \widehat{s}_{ik}&\text{for}\ j=k,\, l\neq i \,,\\
R_{ij} \, R_{kj}^{ki} \, \widehat{s}_{ij} \, \widehat{s}_{ik}-R_{ki}\,R_{kj}^{ij}\,\widehat{s}_{ik}\,\widehat{s}_{ij}&\text{for}\ j\neq k,\, l=i \,,\\
R_{ij} \, R_{ki}^{kj} \, \widehat{s}_{ij}\,\widehat{s}_{jk}-R_{kj}\,R_{ik}^{ij} \, \widehat{s}_{jk} \, \widehat{s}_{ij}&\text{for}\ j\neq k,\, l=j ,
\end{cases}
  \end{equation*}
together with the relations $\widehat{s}_{ij} \, \widehat{s}_{ik}=\widehat{s}_{jk} \, \widehat{s}_{ij}=\widehat{s}_{ik} \, \widehat{s}_{jk}$ in the symmetric group $W$. This gives
  \begin{align*}
  \biggl[\sum_{j (\ne i)}^n \! R_{ij} \, \widehat{s}_{ij},\sum_{l (\ne k)}^n \! R_{kl} \, \widehat{s}_{kl}\biggr]=&\sum_{j (\ne i,k)}^n(R_{ik}{R_{ij}^{kj}}  +R_{ij}R_{kj}^{ki}-R_{kj}R_{ik}^{ij} ) \, \widehat{s}_{ij}\,\widehat{s}_{ik}
  \\&
    + \!\sum_{j (\ne i,k)}^n(R_{ij}R_{ki}^{kj}-R_{ki}R_{kj}^{ij}-R_{kj}R_{ij}^{ik}) \, \widehat{s}_{ij} \, \widehat{s}_{jk}=0\,.
  \end{align*}
Both coefficients, at $\widehat{s}_{ij}\widehat{s}_{ik}$ and $\widehat{s}_{ij}\widehat{s}_{jk}$, vanish due to the associative Yang--Baxter equation~\eqref{eq:AYBEi} and the skew-symmetry property~\ref{it:skew} of the $R$-matrix.
  \end{proof}
  \begin{remark}
The scalar choice \eqref{Krphi} corresponds to the elliptic Dunkl operators introduced by Buchstaber, Felder and Veselov \cite{BFV}.
  \end{remark}

\begin{remark}
    The idea that the commutativity of Dunkl operators of a general form \eqref{RD} is related to the associative Yang--Baxter equation goes back to \cite{FoKi}, cf. \cite{K}. Our setup is only slightly different as we allow $R_{ij}$ to depend on additional spectral variables. 
\end{remark}

\subsection{}
The operators $y_i$ have classical counterparts that are obtained by taking $\hbar\to 0$. To formalise this procedure, we use the framework of formal deformations, following the setup in \cite[Sec.~2.3]{C19}, see also \cite[Sec.~3.1]{E} for the general framework and references. We replace the ring $\D(V)$ of differential operators with the algebra 
\begin{equation}\label{ah}
\Ah=\c[[\hbar]]\otimes \c(V)[\hat p_1, \dots, \hat p_n]\,,\qquad \hat p_i=\hbar \, \partial_{i}\,.
\end{equation}
The quantum momenta $\hat p_i$ satisfy the relations $[\hat p_i, f]=\hbar\,\partial_{i} f \coloneqq \hbar\otimes \partial_{i}f$ for $f\in\c(V)$.
We have a linear map
\begin{equation}\label{eta}
\eta_0 \colon \Ah\to A_0\,,\qquad f\mapsto f\,,\ \ \hat p_i\mapsto p_i\,,\ \ \hbar\mapsto 0\,,
\end{equation}
where 
\begin{equation}\label{a0}
A_0=\c(V)\otimes\c[V^*]=\c(V)[p_1, \dots, p_n]    
\end{equation}
is the classical, commutative analogue of $\Ah$.
The map $\eta_0$ induces an algebra isomorphism $\eta_0 \colon \Ah/\hbar \, \Ah\overset{\sim}{\to} A_0$.
Therefore, $\Ah$ is a formal deformation of $A_0$. This equips $A_0$ with the Poisson bracket defined by $\{\eta_0(a), \eta_0(b)\}=\eta_0(\hbar^{-1}[a,b])$ for $a,b\in \Ah$. In particular, $\{p_i,x_j\}=\delta_{ij}$.
The map $\eta_0$ is compatible with the action of $W$, so it can be naturally extended to a linear map
\begin{equation*}
\eta_0 \colon (\Ah\otimes\End\,U)*\Wh \to (A_0\otimes\End\,U)*\Wh\,,   
\end{equation*}
and an algebra isomorphism 
\begin{equation*}
\eta_0 \colon \bigl((A_\hbar/\hbar \, A_\hbar)\otimes\End\,U\bigr)*\Wh \overset{\sim}\to (A_0\otimes\End\,U)*\Wh\,.   
\end{equation*}
Hence, $(\Ah\otimes\End\,U)*\Wh$ is a formal deformation of $(A_0\otimes\End\,U)*\Wh$.
For any element $a\in(\Ah\otimes\End\,U)*\Wh$, we call $\eta_0(a)$ the \emph{classical limit} of $a$.

\medskip

We can now view the $R$-matrix Dunkl operators $y_i$ as elements of $(\Ah\otimes\End\,U)*\Wh$, so their classical limit $y_i^{\cc} \coloneqq \eta_0(y_i)$ is 
\begin{equation}\label{RDc}
y_{i}^{\cc}=p_i-g\sum_{j(\ne i)}^n R_{ij} \, \widehat{s}_{ij}\,,\qquad i=1,\dots, n\,.
\end{equation}
These will be referred to as \emph{classical $R$-matrix Dunkl operators}; they form a commutative, equivariant family of elements in $(A_0\otimes\End\,U)*\Wh$.

\remark{In this setting, the classical limit of $(\Ah\otimes\End\,U)*\Wh$ is a \emph{noncommutative} algebra $(A_0\otimes\End\,U)*\Wh$, so we are not quite in the setting of Poisson deformations, see however \cite{MV} for a Poisson-geometric picture in such situations. There is also a nice framework of hybrid integrable systems developed in \cite{LRS}. It would be interesting to study the integrable systems constructed in the present paper in the frameworks of \cite{MV, LRS}.}

\section{{\it R}-matrix Calogero--Moser Hamiltonians}
\subsection{}
\label{subsection:cecms} We can use $y_i$ to construct an $R$-matrix analogue of the quantum Calogero--Moser system. The idea, cf.~\cite{EFMV}, is to substitute Dunkl operators into suitable \emph{classical} Hamiltonians. Recall that the classical elliptic Calogero--Moser (eCM) system is described by the following Hamiltonian:
\begin{equation}\label{cmc}
h=h(x,p)=\sum_{i=1}^n p_i^2-2\,\g^2\sum_{i<j}^n\wp(x_i-x_j)\,.
\end{equation}
We view $h$ as an element of the algebra \eqref{a0}; note that $h\in A_0^W$ due to its $W$-invariance. 

The eCM system \eqref{cmc} has a Lax presentation \cite{Ca, Kr} and is completely integrable in Liouville sense \cite{Pe}. More concretely, let $\sigma_r(p)$ denote the $r$th elementary symmetric polynomial of $p_1,\dots, p_n$. Then there are $n$ functions $h_r(x,p)\in A_0^W$, $r=1,\dots, n$, such that $h_r=\sigma_r(p)+\text{l.o.t.}$, and $\{h_r, h_s\}=0$ for all $r,s$. Here are a first few of them (see \cite[(4.14), (4.15)]{OP1} for a general formula):
\begin{align*}
    h_1&=p_1+\dots+p_n\,,\\
    h_2&=\sum_{i<j}^n\left(p_i \, p_j + g^2\,\wp(x_i-x_j)\right)\,,\\
    h_3&=\sum_{i<j<k}^n \!\! \left(p_i \, p_j \, p_k+{\g}^2 \, \wp(x_i-x_j) \, p_k+{\g}^2 \, \wp(x_i-x_k) \, p_j+{\g}^2 \, \wp(x_j-x_k) \, p_i\right)\,.
\end{align*}
Note that that the Hamiltonian \eqref{cmc} is $h(x,p)=h_1^2-2\,h_2$. These classical Hamiltonians can be quantised, i.e., lifted to some $H_1, \dots, H_n\in \Ah^W$ with $h_r=\eta_0(H_r)$ and $[H_r, H_s]=0$ for all $r,s$. See \cite{OS} for explicit expressions and \cite{EFMV} for a general construction using elliptic Dunkl operators. In particular, a quantum analogue of $h(x,p)$ is
\begin{equation}\label{cmq}
H=\sum_{i=1}^n \hat p_i^2-2\,\g\,(\g-\hbar)\sum_{i<j}^n\wp(x_i-x_j)\,.
\end{equation}

\subsection{}
Next, we make the substitution 
\begin{equation}\label{subs}
x_i\mapsto \lambda_i,\quad p_i\mapsto y_i     
\end{equation}
in the classical Hamiltonians $h_r(x,p)$. This is well defined since $y_i$ commute between themselves and with all $\lambda_j$. We denote the result symbolically as $h_r(\lambda, y)$; this is a $\lambda$-dependent element of $(\Ah\otimes\End\,U)*\Wh$. Similarly, replacing $p_i$ with the classical Dunkl operators $y_i^{\cc}$ we obtain $h_r(\lambda, y^{\cc})$, an element of $(A_0\otimes\End\,U)*\Wh$.
The following result, whose proof will be postponed to Sec.~\ref{prf}, plays a crucial role and is analogous to \cite[Prop.~5.1]{C19}.  

\begin{prop}\label{main}
$(1)$ For each classical eCM Hamiltonian $h_r(x,p)$, $r=1,\dots, n$, the element $h_r(\lambda, y)$ has a well-defined limit at $\lambda=0$, denoted $h_r(0,y)$. This produces $n$ pairwise commuting elements in $\bigl((\Ah\otimes\End\,U)*\Wh\,\bigr)^W$. 

$(2)$ In the classical case, $h_r(\lambda, y^{\cc})$ does not depend on $\lambda$ and, moreover, reduces to a\,\footnote{\ {We do not claim that it reduces to $h_r(x,p)$ for general $r$, as it may well have lower-order Hamiltonians added to it. In examples we do retrieve $h_r(x,p)$ for $r=2,3$.}}
usual (scalar) classical eCM Hamiltonian. That is, $h_r(\lambda, y^{\cc})=h_r(0, y^{\cc})$ is an element of $\c[h_1,\dots, h_n]\subset A_0^W\subseteq (A_0\otimes\End\,U)*\Wh$.
\end{prop}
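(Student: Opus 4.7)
The plan is to prove Part (1) in three stages --- commutativity, $W$-invariance, and regularity at $\lambda=0$ --- and then handle Part (2) by a separate argument in the classical setting.

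Commutativity and $W$-invariance come essentially for free. Since $[y_i, y_j]=0$ by Lemma \ref{lemma1}(1), the substitution of the commuting operators $y_1, \dots, y_n$ into $h_r(x,p)$ is unambiguous, and $[h_r(\lambda, y), h_s(\lambda, y)]=0$ at every $\lambda$ at which both are defined, a property that persists in the limit. For $W$-invariance, combining the diagonal $W$-invariance of $h_r(x,p)\in A_0^W$ with the equivariance of Lemma \ref{lemma1}(2) yields $\hw \, h_r(\lambda, y(\lambda)) \, \hw^{-1} = h_r(w.\lambda, y(w.\lambda))$, which at $\lambda=0$ gives the required $\Wh$-invariance of $h_r(0, y)$.

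The substantive content is therefore the existence of the limit at $\lambda=0$. Potential singularities can arise from the $\wp(\lambda_i - \lambda_j)$ terms in $h_r(x,p)|_{x=\lambda}$ and from the simple poles of $R_{ij}$ at $\lambda_i = \lambda_j$. The key observation is that, by property \ref{it:regular} (cf.\ the expansion \eqref{expR}), $\mathrm{res}_{\mu=0} R(z,\mu) = \mathrm{Id}$ is independent of $z$, so near any wall $\lambda_i = \lambda_j$ the Dunkl operator admits the local expansion
\begin{equation*}
 y_i = \hat p_i - g\sum_{k (\ne i)} \!\frac{\hs_{ik}}{\lambda_i - \lambda_k} + (\text{regular at the wall})\,,
\end{equation*}
whose principal part is exactly that of a rational Dunkl operator. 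My plan is then to analyse the Laurent tail of $h_r(\lambda, y)$ stratum by stratum: on each codimension-one wall, to show that the polar part is governed entirely by the rational principal parts, so that regularity reduces to the known statement for rational Dunkl operators of \cite{EFMV, C19}; the subleading corrections in the expansions of $y_i$ and of $\wp(\lambda_i - \lambda_j)$ are holomorphic at $\lambda=0$ and feed only into the regular part. Controlling the cancellations on deeper, codimension-two strata --- where two distinct walls meet --- requires the AYBE in the form \eqref{eq:AYBEi}, and this bookkeeping is the main technical obstacle.

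For Part (2), in the classical setting the substitution $p_i \mapsto y_i^{\cc}$ can be performed explicitly and the result expanded according to the $\Wh$-grading. The plan is to show, using the skew-symmetry $R_{ij} = -R_{ji}$ and the AYBE in the spirit of the computation in the proof of Lemma \ref{lemma1}, that after forming the symmetric polynomial $h_r$ all components with $w \ne e$ cancel and all $\End\,U$-valued coefficients collapse to scalars, leaving a scalar, $\lambda$-independent element of $A_0^W$. A convenient consistency check is the evaluation at $g=0$, where $h_r(\lambda, y^{\cc}) = \sigma_r(p)$. Having established this, $h_r(\lambda, y^{\cc})$ is a $W$-invariant polynomial in the $p_i$ with leading symbol $\sigma_r(p)$ that Poisson-commutes with the entire family $h_s(\lambda, y^{\cc})$, and by the Liouville integrability of the classical eCM system the only such family lies in $\c[h_1, \dots, h_n]$, which yields the claim.
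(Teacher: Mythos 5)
Your outline of Part (1) has the right instinct (reduce to the codimension-one walls $\lambda_i=\lambda_j$, where the polar part of $y_i$ is rational by property \ref{it:regular}), but it stops exactly where the work begins: you flag the cancellations on codimension-two strata as ``the main technical obstacle'' and leave them unresolved, so as written this is a plan rather than a proof. The paper's route via \cite[Sec.~5.3]{EFMV} dissolves that obstacle: the EFMV criterion says it suffices to check, for each root $\alpha=e_i-e_j$, that $(\lambda_i-\lambda_j)\,y_\alpha$ and $y_\alpha^2-\langle\alpha,\alpha\rangle\,\wp(\lambda_i-\lambda_j)$ are regular near the single wall $\lambda_i=\lambda_j$ (deeper strata are then handled for free, by a Hartogs-type extension), and the second check is extracted from the already-computed explicit formula \eqref{y2} for the quadratic Hamiltonian, whose manifest regularity gives regularity of $(y_i-y_j)^2-4g^2\wp(\lambda_i-\lambda_j)$ since $y_i+y_j$ and the $y_k$, $k\ne i,j$, are regular on the wall. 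No AYBE bookkeeping on deep strata is ever needed.

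Part (2) is where your proposal would genuinely fail. You assert that skew-symmetry and the AYBE, ``in the spirit of Lemma \ref{lemma1},'' will make all $\hw$-components with $w\ne\id$ cancel and all $\End U$-coefficients collapse to $\lambda$-independent scalars; but no mechanism is given, and a direct algebraic cancellation of this kind is not available for general $r$ (the paper only verifies $r=2,3$ by explicit computation). The actual argument is global-analytic, not algebraic: using the quasi-periodicity \eqref{trans} of the Baxter--Belavin matrix, $h_r(\lambda,y^{\cc})$ transforms under $\lambda\mapsto\lambda+e_i$ and $\lambda\mapsto\lambda+e_i\tau$ by conjugation by $Q_i$ and $\Lambda_i$ (up to exponential factors); regularity at $\lambda_i=\lambda_j$ then propagates to all shifted hyperplanes $\lambda_i-\lambda_j=m+n\tau$, so each coefficient $a_w$ is entire and quasi-periodic with respect to the lattice $d(\Z^n\oplus\tau\Z^n)$, hence constant in $\lambda$ by Liouville, and the nontrivial transformation factors force $a_w=0$ for $w\ne\id$. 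That $a_{\id}$ is scalar is then shown using the braid relations (a consequence of the QYBE) together with the unitarity identity $(R_{ij}\hs_{ij})^2=(\wp(\lambda_i-\lambda_j)-\wp(x_i-x_j))\,\mathrm{Id}_U$, which reduces everything to the scalar case $d=1$, where the identification of $h_r(0,y^{\cc})$ with an element of $\c[h_1,\dots,h_n]$ comes from the elliptic spherical Cherednik algebra \cite[Sec.~6.3--6.4]{EFMV}. Your closing appeal to the Poisson commutant of the Liouville-integrable eCM system is plausible but rests on the unproved earlier steps, and it also glosses over the fact that the classical limit lives in the noncommutative algebra $(A_0\otimes\End U)*\Wh$, where commutativity of the family does not directly translate into Poisson commutativity until scalarity has been established.
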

\begin{cor}\label{cormain}
The elements $h_r(\lambda, y)$ admit a decomposition
\begin{equation}\label{spinsep0}
h_r(\lambda, y)=H_r+\hbar \,a_r   
\end{equation} 
for a suitable {(scalar)} quantum eCM Hamiltonian $H_r\in \Ah^W$ and some {matrix-valued} $a_r\in (\Ah\otimes\End\, U)*\Wh$ which is regular near $\lambda=0$.
\end{cor}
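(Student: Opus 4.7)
The plan is to read off the corollary directly from Proposition \ref{main} combined with the existence of the standard commuting family of scalar quantum Calogero--Moser Hamiltonians. First, I would compute the classical limit of $h_r(\lambda, y)$: since the substitution $p_i \mapsto y_i$ commutes with $\eta_0$ in the sense that $\eta_0(h_r(\lambda, y)) = h_r(\lambda, y^{\cc})$, part (2) of Proposition \ref{main} yields
\[
\eta_0\bigl(h_r(\lambda, y)\bigr) = h_r(0, y^{\cc}) = P_r(h_1, \dots, h_n)
\]
for some polynomial $P_r$, which is a scalar, $\lambda$-independent element of $\c[h_1, \dots, h_n] \subseteq A_0^W$. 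Next, invoking the well-known existence \cite{EFMV, OS} of a commuting family $H_1, \dots, H_n \in \Ah^W$ of scalar quantum eCM Hamiltonians with $\eta_0(H_s) = h_s$, I would set $H_r := P_r(H_1, \dots, H_n) \in \Ah^W$, which is well-defined because the $H_s$ mutually commute and which satisfies $\eta_0(H_r) = P_r(h_1, \dots, h_n)$ by construction.

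Now consider the difference $D_r := h_r(\lambda, y) - H_r$. The previous step gives $\eta_0(D_r) = 0$, and since $\ker \eta_0 = \hbar \cdot \bigl((\Ah \otimes \End\, U) * \Wh\bigr)$, this implies $D_r = \hbar \cdot a_r$ for some $a_r \in (\Ah \otimes \End\, U) * \Wh$. For the regularity claim, part (1) of Proposition \ref{main} guarantees that $h_r(\lambda, y)$ has a well-defined limit at $\lambda = 0$, while $H_r$ is $\lambda$-independent, so $D_r$ is regular at $\lambda = 0$; dividing by $\hbar$ does not affect the $\lambda$-dependence, whence $a_r$ is regular near $\lambda = 0$, as required.

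The essentially only non-trivial input here is Proposition \ref{main} itself, whose proof is deferred to Section 5 and which encapsulates both the cancellation of the $R$-matrix poles at $\lambda = 0$ (part 1) and the remarkable reduction of the classical substitution to scalar Calogero--Moser Hamiltonians (part 2). Given Proposition \ref{main}, the corollary reduces to the formal manipulations described above, with the only genuine choice being which quantisation $H_r$ of $P_r(h_1, \dots, h_n)$ to take; different choices differ by multiples of $\hbar$ and can be absorbed into $a_r$.
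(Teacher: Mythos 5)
Your proposal is correct and follows essentially the same route as the paper: identify the classical limit $\eta_0(h_r(\lambda,y))=h_r(0,y^{\cc})\in\c[h_1,\dots,h_n]$ via Proposition \ref{main}(2), choose a scalar quantisation $H_r\in\Ah^W$ of it, and conclude that the difference lies in $\hbar\,(\Ah\otimes\End\,U)*\Wh$ and is regular near $\lambda=0$ by Proposition \ref{main}(1). Your explicit choice $H_r=P_r(H_1,\dots,H_n)$ and the remark about the $\lambda$-regularity of $a_r$ merely spell out details the paper leaves implicit.
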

\begin{proof}
We abbreviate $h_r:=h_r(\lambda,y)$. By Proposition \ref{main}, $\eta_0(h_r)\in A_0^W$ is a classical eCM Hamiltonian, so it can quantised to a suitable quantum scalar eCM Hamiltonian $H_r\in\Ah^W$. Then $h_r$ and $H_r$ share the same classical limit, so we must have $h_r-H_r\in \hbar\,(\Ah\otimes\End\, U)*\Wh$.
\end{proof}

\subsection{}
To construct $R$-matrix eCM Hamiltonians, we use a version of the {`restriction'} map due to Heckman \cite{He}:
\begin{equation}\label{reshat}
{\widehat\Res}\colon \D_U(V)*\Wh\,\to\, \D_U(V)\,,\qquad \sum_{w\in W}a_{w}\,\hw 
\mapsto \sum_{w\in W}a_{w}\,. 
\end{equation}
The group $W$ acts on $\D_U(V)$ and $\D_U(V)*\Wh$ by \eqref{act}, \eqref{act1}. It
is easy to check that the map ${\widehat\Res}$ is $W$-equivariant, so it can be restricted onto $W$-invariants. The following result is standard (see, e.g., \cite[Lemma 4.1]{C24}).   
\begin{lemma}\label{alg}
The restriction of the map \eqref{reshat} onto $W$-invariants induces and algebra homomorphism ${\widehat\Res}\colon (\D_U(V)*\Wh)^{W}\to \D(V)^{W}$.
\end{lemma}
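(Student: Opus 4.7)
The plan is to verify the two algebra-homomorphism properties directly from the expansion $\sum_w a_w\,\hat{w}$, after first establishing as a lemma that $W$-invariance is preserved: if $b=\sum_{w'} b_{w'}\,\hat{w}' \in (\D_U(V)*\Wh)^W$, then $\widehat{\Res}(b)=\sum_{w'} b_{w'} \in \D_U(V)^W$. (I suspect the target $\D(V)^W$ in the statement is a slight abuse for $\D_U(V)^W$ since the spin factor is not dropped by $\widehat{\Res}$.) Granted the $W$-equivariance of $\widehat{\Res}$ (already noted in the excerpt), this preservation is automatic, but it is worth spelling out in coordinates: applying $\hat{v}.$ to $b$ and comparing coefficients of $\hat{w}'$ using \eqref{act1} gives the transformation rule $b_{vw'v^{-1}}=\hat{v}.b_{w'}$; summing over $w'$ and reindexing yields $\hat{v}.\widehat{\Res}(b)=\widehat{\Res}(b)$.

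Next, for multiplicativity, take $a=\sum_w a_w\,\hat{w}$ and $b=\sum_{w'} b_{w'}\,\hat{w}'$ both $W$-invariant. The product formula recorded just before \eqref{act1} gives
\begin{equation*}
a\,b = \sum_{w,w'} a_w\,(\hat{w}.b_{w'})\,\widehat{ww'}\,,
\end{equation*}
so that
\begin{equation*}
\widehat{\Res}(a\,b) = \sum_{w} a_w \sum_{w'} \hat{w}.b_{w'} = \sum_{w} a_w \cdot \bigl(\hat{w}.\widehat{\Res}(b)\bigr)\,.
\end{equation*}
By the preservation lemma, $\widehat{\Res}(b)$ is $W$-invariant, so $\hat{w}.\widehat{\Res}(b)=\widehat{\Res}(b)$ for every $w\in W$. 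Pulling it out of the sum gives $\widehat{\Res}(a\,b)=\bigl(\sum_w a_w\bigr)\,\widehat{\Res}(b)=\widehat{\Res}(a)\,\widehat{\Res}(b)$. The unit axiom $\widehat{\Res}(1)=1$ is immediate from $1=1\cdot\hat{e}$. Combined with linearity of \eqref{reshat}, this proves the claim.

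The only potentially delicate point is the interchange in the middle display above: multiplication in the crossed product introduces the twist $\hat{w}.b_{w'}$ which, a priori, depends on $w$ and cannot be factored out of the sum. The trick is that once we sum over $w'$, this twist acts on $\widehat{\Res}(b)$, which is $W$-invariant precisely because $b$ is, so the twist disappears. In other words, the entire argument reduces to confirming that $\widehat{\Res}$ is $W$-equivariant and that the image of $(\cdot)^W$ sits inside $\D_U(V)^W$; once those are in place, the multiplicativity is a one-line manipulation. No further input (no $R$-matrix properties, no commutativity of Dunkl operators) is needed at this stage.
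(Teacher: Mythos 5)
Your proof is correct and is exactly the standard argument that the paper invokes by citing \cite[Lemma 4.1]{C24} without spelling it out: the coefficient transformation rule $b_{vw'v^{-1}}=\hat v. b_{w'}$ forces $\widehat\Res(b)$ to be $W$-invariant, and this invariance is precisely what lets the twist $\hat w.b_{w'}$ in the crossed-product multiplication disappear after summing over $w'$. Your side remark is also right: the codomain in the statement should read $\D_U(V)^{W}$ rather than $\D(V)^{W}$, consistent with the matrix-valued version \eqref{reshat1} given immediately afterwards.
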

We can also view \eqref{reshat} as a map between $(\Ah\otimes\End\,U)*\Wh$ and $\Ah\otimes\End\, U$, which induces an algebra homomorphism
\begin{equation}\label{reshat1}
{\widehat\Res}:\ \left((\Ah\otimes\End\,U)*\Wh\right)^W\,\to\, (\Ah\otimes\End\, U)^W\,. 
\end{equation}
Combining this algebra map with Proposition \ref{main} gives the following result.

\begin{prop}\label{mainsp} Consider the elements $h_r(0,y)$ constructed in Proposition \ref{main} and define $R$-matrix eCM Hamiltonians by
$\rH_r={\widehat\Res}(h_r(0,y))$. Then $\rH_1, \dots, \rH_n$ are pairwise commuting $W$-invariant elements of $\Ah\otimes\End\, U$, with $\rH_r=\sigma_r(\hat p)+\ldots$, up to lower-order terms in $\hat p_i$. The classical limit of $\rH_r$ is a usual (scalar) classical eCM Hamiltonian, i.e., an element of $\c[h_1,\dots, h_n]$.       
\end{prop}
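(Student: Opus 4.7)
The strategy is to derive all three claims from Proposition \ref{main} by applying the algebra homomorphism $\widehat{\Res}$ from \eqref{reshat1}, and then inspecting leading terms and classical limits separately.

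First I would establish commutativity and $W$-invariance. Proposition \ref{main}(1) provides pairwise commuting elements $h_r(0,y) \in \bigl((\Ah\otimes\End\,U)*\Wh\bigr)^W$. Since the restriction of $\widehat{\Res}$ to $W$-invariants is an algebra homomorphism (Lemma \ref{alg}, extended to the deformation setting as in \eqref{reshat1}) landing in $(\Ah\otimes\End\,U)^W$, the images $\rH_r=\widehat{\Res}(h_r(0,y))$ form a commuting family of $W$-invariant elements.

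For the leading-order statement $\rH_r=\sigma_r(\hat p)+\ldots$, I would track the $\hat p$-degree. Each Dunkl operator reads $y_i=\hat p_i-\g\sum_{j(\ne i)}R_{ij}\,\hs_{ij}$, whose top $\hat p$-degree component is the scalar $\hat p_i$ sitting in the $\hw=e$ part of the crossed product. Since $h_r(x,p)=\sigma_r(p)+\text{l.o.t.}$ in $p$, the substitution $p_i\mapsto y_i$ shows that the degree-$r$-in-$\hat p$ part of $h_r(\lambda,y)$ is exactly $\sigma_r(\hat p)$, supported on the identity of $\Wh$ and free of $R_{ij}$-factors. This piece is regular at $\lambda=0$, survives the limit $\lambda\to 0$ unchanged, and is preserved verbatim by $\widehat{\Res}$; the lower-order corrections contribute only to the l.o.t.\ in $\hat p_i$.

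For the classical limit, I would observe that $\eta_0$ and $\widehat{\Res}$ commute: the former acts on the coefficients $a_w$ of $\sum_w a_w\hw$ (via $\hbar\to 0$, $\hat p_i\to p_i$), while the latter forgets the $\hw$-factor, so neither operation interferes with the other. Hence $\eta_0(\rH_r)=\widehat{\Res}\bigl(\eta_0(h_r(0,y))\bigr)=\widehat{\Res}\bigl(h_r(\lambda,y^{\cc})\bigr)$. By Proposition \ref{main}(2), the element $h_r(\lambda,y^{\cc})$ already lies in the scalar subalgebra $\c[h_1,\dots,h_n]\subset A_0^W$, on which $\widehat{\Res}$ acts as the identity. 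Therefore $\eta_0(\rH_r)\in \c[h_1,\dots,h_n]$, as asserted.

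Since Proposition \ref{main} does the heavy lifting, the main obstacle here is not conceptual but a bookkeeping one: one must verify carefully that $\widehat{\Res}$ remains a well-defined algebra homomorphism in the formal-deformation setting \eqref{reshat1} (rather than only for honest differential operators) and that it commutes with the classical-limit projection $\eta_0$. Both reduce to direct checks on the presentation of elements as $\sum_w a_w\hw$.
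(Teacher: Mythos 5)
Your proposal is correct and follows essentially the same route as the paper, which states the proposition as a direct consequence of combining the restriction homomorphism \eqref{reshat1} with Proposition \ref{main}; you simply spell out the three verifications (commutativity/invariance via the homomorphism property, the leading term $\sigma_r(\hat p)$ from the top $\hat p$-degree of the Dunkl operators, and the classical limit via the compatibility of $\eta_0$ with $\widehat\Res$) that the paper leaves implicit.
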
  

\begin{cor}
\label{spinsep}
Each of the $R$-matrix eCM Hamiltonians can be written as 
\begin{equation}\label{spinsep1}
\rH_r=H_r+\hbar \,\widehat A_r    
\end{equation} 
for a suitable quantum (scalar) eCM Hamiltonian $H_r$ and some $\widehat A_r\in \Ah\otimes\End\, U$.
\end{cor}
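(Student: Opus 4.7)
The plan is to mimic the proof of Corollary \ref{cormain}, transferring the argument from the crossed product algebra $(\Ah\otimes\End\,U)*\Wh$ to $\Ah\otimes\End\,U$ via the restriction homomorphism $\widehat\Res$ of \eqref{reshat1}.

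First, by Proposition \ref{mainsp}, the element $\rH_r\in(\Ah\otimes\End\,U)^W$ has a classical limit $\eta_0(\rH_r)\in\c[h_1,\dots,h_n]$, i.e., some polynomial $P_r$ in the classical scalar eCM Hamiltonians. Standard quantum integrability of the eCM system provides a commuting family $H_1,\dots,H_n\in\Ah^W$ with $\eta_0(H_i)=h_i$. Substituting these pairwise commuting operators into $P_r$ yields a well-defined scalar quantum eCM Hamiltonian $H_r:=P_r(H_1,\dots,H_n)\in\Ah^W$, which we then view as an element of $\Ah\otimes\End\,U$ via $H_r\otimes\mathrm{Id}_U$.

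By construction, $\eta_0(H_r)=P_r(h_1,\dots,h_n)=\eta_0(\rH_r)$, so $\rH_r-H_r$ lies in the kernel of the classical-limit map on $\Ah\otimes\End\,U$, which is $\hbar\cdot(\Ah\otimes\End\,U)$ since $\Ah/\hbar\,\Ah\simeq A_0$ tensorially. Writing $\rH_r-H_r=\hbar\,\widehat A_r$ for some $\widehat A_r\in\Ah\otimes\End\,U$ then gives the decomposition \eqref{spinsep1}.

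In this setup there is essentially no real obstacle: everything is forced by Proposition \ref{mainsp} together with the known quantisation of the classical eCM integrals, and the only mild care required is the passage from the crossed-product picture used in Corollary \ref{cormain} to the matrix differential operator picture via $\widehat\Res$, plus the harmless embedding of $\Ah^W$ into $\Ah\otimes\End\,U$.
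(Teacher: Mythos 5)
Your argument is correct, but it takes a slightly different route from the paper's. The paper's proof is one line: apply the algebra homomorphism $\widehat\Res$ of \eqref{reshat1} to the crossed-product decomposition $h_r(\lambda,y)=H_r+\hbar\,a_r$ of \eqref{spinsep0} (noting $\widehat\Res(H_r)=H_r$ since $H_r$ is scalar) and then pass to the limit $\lambda\to 0$, using the regularity of $a_r$ near $\lambda=0$. You instead bypass \eqref{spinsep0} entirely and re-run the classical-limit argument of Corollary \ref{cormain} directly on $\rH_r\in(\Ah\otimes\End\,U)^W$: Proposition \ref{mainsp} gives $\eta_0(\rH_r)\in\c[h_1,\dots,h_n]$, you quantise that polynomial to get $H_r$, and conclude $\rH_r-H_r\in\hbar\,(\Ah\otimes\End\,U)$ since the two elements share a classical limit and $\eta_0$ induces an isomorphism modulo $\hbar$. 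Both proofs rest on the same underlying fact (the classical limit of the spin Hamiltonian is a scalar eCM Hamiltonian, so all spin terms are $O(\hbar)$); what yours buys is self-containedness at the level of $\Ah\otimes\End\,U$, with no need to track the $\lambda\to0$ limit of $a_r$, at the mild cost of invoking the quantum integrability of the scalar eCM system to build $H_r=P_r(H_1,\dots,H_n)$ — which the paper also uses implicitly in Corollary \ref{cormain}. The only cosmetic issue is the notational overload of $H_r$ for both the basic quantum integrals and the polynomial combination; the corollary only asks for \emph{a} suitable quantum eCM Hamiltonian, so this is harmless.
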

\begin{proof}
This follows by applying $\widehat\Res$ to \eqref{spinsep0} and passing to the limit $\lambda\to 0$. 
\end{proof}
This tells us that the spin Hamiltonian $\rH_r$ can be approximated by a quantum scalar eCM Hamiltonian, so all the ``spin'' terms are of order $O(\hbar)$ (cf. \cite[Proposition~1.2]{GZ}). This will be important in order to construct spin chain Hamiltonians below using the so-called freezing procedure.

\subsection{}
As an illustration, let us look at the case of the quadratic Hamiltonian.
Making the substitution \eqref{subs} into the Hamiltonian \eqref{cmc} gives
\begin{equation*}
h(\lambda, y)=y_{1}^2+\dots +y_{n}^2-2\,{\g}^2\sum_{i<j}^n\wp(\lambda_i-\lambda_j)\,.
\end{equation*}
By a direct calculation and using the unitarity~\ref{it:unitarity}, skew-symmetry~\ref{it:skew} and the associative Yang--Baxter equation~\ref{it:aybe} for $R$, we obtain
\begin{equation}\label{y2}
h(\lambda,y)= \sum_{i=1}^n \hat p_i^2  
-2\,{\g}^2\sum_{i<j}^n\wp(x_i-x_j)- 2\,\g\,\hbar \sum_{i<j}^n \partial R_{ij} \, \widehat{s}_{ij}.
\end{equation}
Here $\partial R(z, \mu)=\frac{\partial}{\partial z}R(z, \mu)$. From \eqref{expR} we see that $\partial R(z, \mu)$ is regular at $\mu=0$, with $\partial R(z,0)=r'(z)$. Hence, in the limit $\lambda\to 0$ we get
\begin{equation}\label{y2r}
h(0, y) = \sum_{i=1}^n \hat{p}_i^2-2\,{\g}^2\sum_{i<j}^n\wp(x_i-x_j)-2\,\g\,\hbar\sum_{i< j}^n r'_{ij} \, \widehat{s}_{ij}\,.
\end{equation}
This is a $W$-invariant element of $(\Ah\otimes\End\,U)*\Wh$. The classical limit of \eqref{y2} is
\begin{equation}\label{y2c}
h(\lambda,y^{\cc})= \sum_{i=1}^n p_i^2  
-2\,{\g}^2\sum_{i<j}^n\wp(x_i-x_j).
\end{equation}
We see that it
is indeed independent of $\lambda$ and coincides with the classical scalar eCM Hamiltonian $h(x,p)$ from \eqref{cmc}. By applying the map \eqref{reshat} to \eqref{y2r}, we obtain the following quadratic $R$-matrix eCM Hamiltonian (cf. \cite[(3.15)]{GZ}:
\begin{equation}\label{reCMS}
    \rH={\widehat\Res}(h(0,y))=\sum_{i=1}^n \hat{p}_i^2-2\,{\g}^2\sum_{i<j}^n \wp(x_i-x_j) -2\,\g\,\hbar\sum_{i< j}^n r'_{ij}\,.
\end{equation}
This can be rearranged as 
\begin{equation}\label{spinsep2}
    \rH=\sum_{i=1}^n \hat{p}_i^2-2\,{\g}\,(\g-\hbar)\sum_{i<j}^n\wp(x_i-x_j)-2\,\g\,\hbar\sum_{i< j}^n \bigl(r'_{ij}+\wp(x_i-x_j)\bigr)\,,
\end{equation}
in an agreement with Corollary \ref{spinsep}.

\subsection{}
Let us now make the substitution \eqref{subs} into the cubic classical eCM Hamiltonian $h_3(x,p)$. Thus, we need to evaluate
\begin{equation}
h_3(\lambda, y)=\sum_{i<j<k} \! y_i \, y_j \, y_k + \frac{{\g}^2}{2} \! \sum_{i\ne j\neq k\neq i} \! \wp(\lambda_i-\lambda_j) \, y_k\,.
\end{equation}
Skipping intermediate steps, we obtain after taking $\lambda\to 0$ limit, 
\begin{align*}
h_3(0, y)=&\sum_{i<j<k} \! \pp_i \, \pp_j \, \pp_k+\frac{1}{2}\,{\g}^2 \!\!\! \sum_{i\neq j\neq k\neq i} \!\!\! \wp(x_i-x_j)\,\pp_k
\\&+\hbar \, {\g} \! \sum_{i<j<k}(r'_{ij} \, \pp_k \, \widehat{s}_{ij}+ r'_{jk} \, \pp_i \, \widehat{s}_{jk}+ r'_{ki} \, \pp_j \, \widehat{s}_{ki})
\\&+\hbar \, {\g}^2 \! \sum_{i<j<k} \! \left(r'_{ij}\,r_{ik}-r_{jk}\,r'_{ij}-m'_{ij}\right)\widehat{(ijk)}
\\&+\hbar \, {\g}^2 \! \sum_{i<j<k} \! \left(r'_{ij}\,r_{jk}-r_{ik}\,r'_{ij}+m'_{ij}\right){\widehat{(kji)}}\,,
\end{align*}
where $r'(z)=\partial_z r(z)$, $m'(z)=\partial_z m(z)$.
From this, we obtain $\rH_3=\widehat\Res( h_3(0,y))$:
\begin{multline*}
\rH_3=\sum_{i<j<k} \! \left( \pp_i \, \pp_j \, \pp_k+{\g}^2 \, \wp(x_i-x_j)\,\pp_k+{\g}^2 \, \wp(x_j-x_k)\,\pp_i+{\g}^2 \, \wp(x_k-x_i)\,\pp_j\right)
\\+\hbar \, {\g} \! \sum_{i<j<k} \! \left(r'_{ij} \, \pp_k \, + r'_{jk} \, \pp_i \,+ r'_{ki} \, \pp_j \right)
+\hbar \, {\g}^2 \! \sum_{i<j<k}[r'_{ij},\,r_{ik}+r_{jk}]\,.
\end{multline*}
In the classical limit this expression reduces to a scalar eCM Hamiltonian, in agreement with Proposition \ref{main}. We can also decompose it as in \eqref{spinsep1} into $\rH_3=H_3+\hbar \, \widehat A_3$ where
\begin{equation}
    H_3=\sum_{i<j<k} \! \bigl( \pp_i \, \pp_j \, \pp_k+{\g} \, (g-\hbar)\, (\wp(x_i-x_j)\,\pp_k+\wp(x_j-x_k)\,\pp_i+\wp(x_k-x_i)\,\pp_j)\bigr)
\end{equation}
is a quantum eCM Hamiltonian and
\begin{multline}\label{spinsep3}
    \widehat A_3=\, {\g} \sum_{i<j<k} \! \left(r'_{ij} \, \pp_k \, + r'_{jk} \, \pp_i \,+ r'_{ki} \, \pp_j \right)
 \, +{\g}^2 \! \sum_{i<j<k}[r'_{ij},\,r_{ik}+r_{jk}]\\
+{\g} \! \sum_{i<j<k}\bigl(\wp(x_i-x_j)\,\pp_k+\wp(x_j-x_k)\,\pp_i+\wp(x_k-x_i)\,\pp_j\bigr)\,.
\end{multline}

\remark{The quantum Hamiltonian \eqref{reCMS} was proposed in \cite{GZ}. Under our conventions, in the classical limit it reduces to the usual (scalar) eCM Hamiltonian. To see its connection to a classical system of interacting spinning tops, a more subtle version of the classical limit procedure is needed, see the discussion in  \cite[Section~4]{GZ}.}

\remark{Different versions of the quantum and classical spin Calogero--Moser systems were introduced and studied in recent works \cite{ReSt20, ReSt22, Re23}. Interestingly, their construction involves certain first-order differential operators, called asymptotic boundary KZB operators, which can be expressed via Felder's trigonometric dynamical classical $r$-matrices. Note that, unlike for the $R$-matrix Dunkl operators $y_i$, the commutativity of the asymptotic boundary KZB operators is highly non-trivial.}

\section{Proof of Proposition \ref{main}} \label{prf}

\noindent
\textit{Proof of part (1)}. We can follow the proof of \cite[Sec.~5.3]{EFMV}. It only requires us to check that for any root $\alpha=e_i-e_j$, the combinations $(\lambda_i-\lambda_j)\,y_{\alpha}$ and $(y_\alpha)^2-\langle\alpha,\alpha\rangle\,\wp(\lambda_i-\lambda_j)$ are regular near the hyperplane $\lambda_i=\lambda_j$. For $(\lambda_i-\lambda_j)\,y_{\alpha}=(\lambda_i-\lambda_j)(y_i-y_j)$ this is obvious. Next,
\begin{equation}\label{term}
  (y_\alpha)^2-\langle\alpha,\alpha\rangle\,\wp(\lambda_i-\lambda_j)=(y_i-y_j)^2-4\,g^2\,\wp(\lambda_i-\lambda_j)\,.  
\end{equation}
We can use the expression \eqref{y2} for $h(\lambda, y)$. Since $y_k$ for $k\ne i,j$ are regular at $\lambda_i=\lambda_j$, it shows that 
\[
y_i^2+y_j^2-2\,g^2\,\wp(\lambda_i-\lambda_j)=\frac12(y_i-y_j)^2+\frac12(y_i+y_j)^2-2\,g^2\,\wp(\lambda_i-\lambda_j)
\]
is regular at $\lambda_i=\lambda_j$. Because $y_i+y_j$ is regular at $\lambda_i=\lambda_j$, this gives us that \eqref{term} is also regular.

For the $W$-invariance, we note that 
\begin{equation}\label{inv}
\hw. h_r(\lambda, y(\lambda))=h_r(w.\lambda, y(w.\lambda))\,,    
\end{equation}
by $W$-invariance of $h_r$ and the equivariance of the Dunkl operators. In the limit $\lambda\to 0$ this gives $\hw.h_r(0,y)=h_r(0,y)$, as needed.
\smallskip

\textit{Proof of part (2)}. As $h_r(\lambda, y)$ is regular near $\lambda=0$, its classical limit $h_r(\lambda, y^\cc)$ is also regular at $\lambda=0$. Let us show that it is \emph{globally} regular. Possible singularities are along the hyperplanes $\lambda_i-\lambda_j=m+n\,\tau$ with $m,n\in\Z$. To rule them out, let us see how the classical Dunkl operator
\begin{equation*}
y_\xi^{\cc}(\lambda)=p_\xi-g\sum_{i<j}^n (\xi_i-\xi_j) \, R_{ij}(x_i-x_j,\lambda_i-\lambda_j) \, \widehat s_{ij}    
\end{equation*}
behaves under translations $\lambda\mapsto \lambda+u+\tau \,v$ with $u,v\in \Z^n$. 
Shifting $\lambda\mapsto \lambda+e_i$ or $\lambda\mapsto \lambda+e_i\,\tau$ only affects the terms $R_{ij}(x_i-x_j,\lambda_i-\lambda_j) \,\widehat s_{ij}$, $j\ne i$. Using \eqref{trans}, we find that
\[
R_{ij}(x_i-x_j,\lambda_i-\lambda_j+1) \, \widehat s_{ij}= Q_i^{-1}R_{ij}(x_i-x_j,\lambda_i-\lambda_j) \, \widehat s_{ij} \, Q_i\,,
\]
where $Q_i$ denotes the operator $Q$ acting in the $i$th tensor factor of $U$. Note that $Q_i$ commutes with any term $R_{kl} \, \widehat s_{kl}$ with $k,l\ne i$. Therefore,
\begin{equation*}
 y_\xi^{\cc}(\lambda+e_i)= Q_i^{-1} \, y_\xi^{\cc}(\lambda) \, Q_i\,.   
\end{equation*}
Similarly, 
\begin{equation*}
R_{ij}(x_i-x_j,\lambda_i-\lambda_j+\tau) \, \widehat s_{ij}= \exp\!\left(-\frac{2\pi\mathrm{i}\,(x_i-x_j)}{d}\right)\Lambda_i^{-1}R_{ij}(x_i-x_j,\lambda_i-\lambda_j)\,\widehat s_{ij}\,\Lambda_i\,,
\end{equation*}
which implies
\begin{equation*}
    y_\xi^{\cc}(\lambda+e_i\,\tau)=
    \exp\!\left(-\frac{2\pi\mathrm{i}\,x_i}{d}\right)\Lambda_i^{-1}
    y_\xi^{\cc}(\lambda) \, \Lambda_i
    \exp\!\left(\frac{2\pi\mathrm{i}\,x_i}{d}\right)\,.
\end{equation*}
The Hamiltonian $h_r$ has elliptic coefficients, hence $h_r(\lambda,y)$ has the same translations properties in $\lambda$ as $y_\xi^\cc$, namely, 
\begin{align}\la{trp1}
h_r(\lambda, y^{\cc}) & \: \xrightarrow{\lambda\mapsto \lambda+e_i} \ Q_i^{-1}
h_r(\lambda, y^{\cc}) \, Q_i^{-1}\,,
\\
\la{trp2}
h_r(\lambda, y^{\cc})\  &\xrightarrow{\lambda\mapsto \lambda+e_i\tau} \ 
\exp\!\left(-\frac{2\pi\mathrm{i}\,x_i}{d}\right)\Lambda_i^{-1}h_r(\lambda, y^{\cc}) \, \Lambda_i
    \exp\!\left(\frac{2\pi\mathrm{i}\,x_i}{d}\right)
\,.
\end{align}
Since we know that $h_r(\lambda, y^{\cc})$ is regular along the hyperplanes $\lambda_i-\lambda_j=0$, it must be regular along the shifted hyperplanes $\lambda_i-\lambda_j=m+n\,\tau$ with $m,n\in\Z$. As a result, it is regular everywhere, as needed.
Iterating \eqref{trp1}--\eqref{trp2} $d$ times we get $Q_i^{\mspace{2mu}d} =\Lambda_i^d=1$ and
\begin{equation}\label{trp3}
h_r(\lambda, y^{\cc}) \: \xrightarrow{\lambda\mapsto \lambda+de_i}
h_r(\lambda, y^{\cc}) \,,\qquad    h_r(\lambda, y^{\cc}) \  \xrightarrow{\lambda\mapsto \lambda+d\,e_i\tau} \ 
e^{-2\pi\mathrm{i}\,x_i} \, h_r(\lambda, y^{\cc})
    e^{2\pi\mathrm{i}\,x_i}\,.
\end{equation}
Now let us expand $h_r(\lambda, y^{\cc})$ as 
\begin{equation*}
h_r(\lambda, y^{\cc})= \sum_{w\in W} a_w \, \hw\,,\qquad a_w\in A_0\otimes\End\, U.  
\end{equation*}
Each coefficient $a_w$ is an $\End\, U$-valued function of $x, p, \lambda$, and it is globally holomorphic in $\lambda$. From \eqref{trp3} we see that, as a function of $\lambda$, $a_w$ is quasi-periodic with respect to the lattice $d \, (\Z^n\oplus \tau \Z^n)$. However, a holomorphic quasi-periodic function must be constant, which proves that each $a_w$ is constant in $\lambda$. And since \eqref{trp3} holds for all $i$, we must have $a_w=0$ for $w\ne\id$. As a result, $h_r(\lambda, y^{\cc})=a_{\id}$ is an element of $A_0\otimes\End\, U$. 

It remains to show that $a_\id$ is a multiple of $\mathrm{Id}_U$, i.e., lies in $A_0$. To this end, note that an expansion of $(y_i^{\cc})^k$ for any $k\in\N$ consists of summands of the form
\begin{equation}\label{summ}
    p_{i_1} \!\dotsb p_{i_l} \, R_{a_1b_1}\widehat{s}_{a_1b_1} \!\dotsb R_{a_mb_m}\widehat{s}_{a_mb_m}\,.
\end{equation}
We are interested in the terms contributing to $a_\id$, i.e., those with $\widehat{s}_{a_1b_1}\!\dotsb \widehat{s}_{a_mb_m}=\id$. The group $W$ is a Coxeter group, hence one can transform $\widehat{s}_{a_1b_1}\!\dotsb \widehat{s}_{a_mb_m}$ into the identity by using the braid relations $s_{ij}\,s_{jk}\,s_{ij}=s_{jk}\,s_{ij}\,s_{jk}$ and $s_{ij}^2=\id$. Since $R$ satisfies the quantum Yang--Baxter equation~\eqref{eq:YBE}, the elements $R_{ij}\,\widehat{s}_{ij}$ satisfy the braid relations and also 
$(R_{ij}\,\widehat{s}_{ij})^2=(\wp(\lambda_i-\lambda_j)-\wp(x_i-x_j))\,\mathrm{Id}_U$, by the unitarity. From this, it follows that the coefficients of the form \eqref{summ} contributing to $a_\id$ must all be scalars. Moreover, this argument also shows that this scalar does not depend on the spin dimension $d$, hence it is the same as in the $d=1$ case. In that case, $y_i$ are the usual (scalar) elliptic Dunkl operators and $h_r(0,y^\cc)$ represents a global section of the (classical) elliptic spherical Cherednik algebra, hence,
it must be a polynomial in $h_1, \dots, h_n$. See \cite[Sec.~6.3, 6.4]{EFMV} for the details (in particular, Theorem 6.3 of \textit{loc.~cit.}). \qed 

\begin{remark}\label{lf}
 {In \cite{C24}, some additional Hamiltonians for the spin eCM system were constructed using the \emph{spherical Cherednik algebra}. Recall that for $W=S_n$ and $g\in\c$, we have a Poisson subalgebra $\BB_g\subset A_0^W$, called the classical rational spherical Cherednik algebra, see \cite[Sec.~2.5]{C24} and \cite[Chapter 7]{E} for further details and references. It is known that $\BB_g$ is a universal Poisson deformation of $\c[V\times V^*]^W$. 
It turns out that for $f\in \BB_g$, the elements $f(\lambda, y)$ are regular at $\lambda=0$, hence $f(0, y)$ are well-defined (this is proved in the same way as Proposition \ref{main}(1)). We can now define, for any $f\in \BB_g$, $\L_f:=\widehat \Res(f(0, y))$. These are $W$-invariant elements of $\D_U$ which by construction commute with any $\Hsf_r$ and with each other. However, calculations in a few small examples show that, unlike in \cite{C24}, these $\L_f$ are trivial. We do not know if this is true in small examples only or for some general reason. For concrete examples, see Remark \ref{lcheckf} below.  
}   
\end{remark}

\section{{\it R}-matrix Lax pairs}

\subsection{}We can use the operators $y_i$ to re-derive the $R$-matrix Lax pairs found by Levin, Olshanetsky and Zotov in \cite{LOZ}.
We will use the approach of \cite{C19}, first constructing a quantum Lax pair and then passing to the classical limit. 
We will work in a specific representation of $\D_U(V)*\Wh$. Namely, let us view $\c(V)\otimes U$ as a left $\D_U(V)=\D(V)\otimes \End\,U$-module with the usual action by $\End\,U$-valued differential operators, and consider the induced module
\begin{equation*}
M=\Ind_{\D_U(V)}^{\D_U(V)*\Wh}\,\c(V)\,.
\end{equation*}

We can write elements of $M$ as $f=\sum_{w\in W} \widehat w \, f_w$ with $f_w\in\c(V)\otimes U$, thus identifying $M$ and $\c \Wh\otimes (\c(V)\otimes U)$ as vector spaces. The algebra $\End_\c(M)$ is then identified with $\End_\c(\c \Wh)\otimes \End_\c(\c(V)\otimes U)$, i.e., with matrices of size $|W|$ whose entries are operators acting on $\c(V)\otimes U$. As a result, the (left) action of $\D_U(V)*\Wh$ on $M$ gives a faithful representation
\begin{equation}\la{rep0}
\D_U(V)*\Wh\to \Mat(|W|, \D_U(V))\,.
\end{equation}
For any $W$-invariant $a\in\D_U(V)^W$ we have:
$a\left(\sum_{w\in W} \widehat w \,f_w\right)=\sum_{w\in W} \widehat w \, (a \, f_w)$.
Therefore, in the above representation such $a$ act as $a \, \mathbb{1}$.

\medskip
Now pick a Dunkl operator $y_\xi$ and any particular classical eCM Hamiltonian $h_r(x,p)$. Obviously, $y_\xi$ commutes with $h_r(\lambda,y)$. 
By \eqref{spinsep0}, 
\begin{equation}\la{acal}
 h_r(\lambda,y)=H_r+\hbar\, a_r\,,
\end{equation}
where $H_r$ is a quantum (scalar) eCM Hamiltonian, and $a_r\in (\Ah\otimes\End\, U)*\Wh$. The element $a_r$ depends on $\lambda$ but is regular near $\lambda=0$.

 Let $\mathcal L$, $\mathcal H_r$, $\mathcal A_r$ be the matrices representing under \eqref{rep0} the action of $y_\xi$, $H_r$ and $a_r$, respectively. Since $H_r$ is $\widehat W$-invariant, the matrix $\mathcal H_r$ is diagonal, $\H_r=H_r \, \mathbb{1}$. Then the commutativity $[y_\xi, h_r(\lambda,y)]=0$ leads to
\begin{equation}\label{leqg}
[\mathcal L, \mathcal H_r+ \hbar \mathcal A_r]=0\,,\quad \text{or}\quad 
\hbar^{-1}[H_r\,\mathbb{1},\, \mathcal L]=[\mathcal L, \mathcal A_r]\,.
\end{equation}
This is referred to as a quantum Lax equation. Passing to the classical limit, we
write $L:=\eta_0(\mathcal L)$, $A_r:=\eta_0(\mathcal A_r)$ and obtain
\begin{equation}
    \dot L=[L, A_r]\,,
\end{equation}
where $\dot{L}$ denotes the time-derivative with respect to the classical Hamiltonian flow defined by $\eta_0(H_r)$. Therefore, we have obtained a compatible family of quantum and classical Lax pairs $\mathcal L, \mathcal A_r$ and $L, A_r$, $r=1,\dots, n$, given by matrices of size $|W|=n!$\,.

\subsection{}
To get Lax pairs of size $n\times n$, we choose $y_\xi=y_1$, i.e., $\xi=(1,0,\dots, 0)$. The stabiliser $W'\subseteq W$ of $\xi$ is the symmetric group on the letters $\{2,\dots, n\}$, $W'\cong S_{n-1}$. Introduce
\begin{equation*}
e'=\frac{1}{(n-1)!}\sum_{w\in W'}\hw\,.
\end{equation*}
Choose $\lambda^0=(\mu,0,\dots,0)$ sufficiently close to $\lambda=0$. It has the same stabiliser as $\xi$. We can specialize $\lambda$ to $\lambda^0$ in \eqref{acal}, because all the terms are regular near $\lambda=0$. 
Also, from the definition of $y_1$ it is obvious that it is regular at $\lambda=\lambda^0$ and  specialises to
\begin{equation}\label{y1}
 y_1=\hat p_1 - {\g} \! \sum_{j (\ne 1)}^n \! R_{1j}(x_1-x_j, \mu) \, \widehat{s}_{1j}\,.
\end{equation}
The equivariance of the Dunkl operators implies that such $y_1$ is partially symmetric, namely, $\hw y_1 =y_1 \hw$ for any $w\in W'$. From \eqref{inv} with $\lambda=\lambda^0$, it is clear that the left-hand side of \eqref{acal} is also $\widehat W'$-invariant, and since $H_r$ is $\widehat W$-invariant, we get that $a_r$ is $\widehat W'$-invariant, that is, $\hw a_r=a_r\hw$ for all $w\in W'$. As a result, both $y_1$ and $a_r$ can be restricted on the subspace $M'=e'M$.
Elements of $M'$ can be written as
\begin{equation}
f=\sum_{i=1}^n e' \, \widehat s_{1i}f_i \,,\qquad f_i\in\c(V)\otimes U\,,    
\end{equation}
where $s_{11} \coloneqq \id$.
We can think of such $f$ as an $n$-tuple of elements in $\c(V)\otimes U$. 
This identifies $M'$ with $(\c(V)\otimes U)^{\oplus n}$, and therefore all the terms in \eqref{leqg} restrict to $n\times n$ (operator-valued) matrices acting on $M'$. Therefore, we obtain a compatible family of quantum/classical Lax pairs $\mathcal L, \mathcal A$ and $L, A$ of size $n\times n$, with the same Lax matrices $\mathcal L$ and $L$ but with  
different $\mathcal A=\mathcal A_r$, $A=A_r$ for each of the eCM Hamiltonians.

\subsection{} Let us follow the procedure above and construct a Lax pair for the Hamiltonian $\frac12h(x,p)=\frac12h_1^2-h_2$. Using (\ref{y2}) we have $\frac12h(\lambda,y)=H+\hbar\, a$, where 
\begin{equation}
H=\frac12\sum_{i=1}^n \hat p_i^2  
-{\g}\,(g-\hbar)\sum_{i<j}^n\wp(x_i-x_j)\,,\quad 
a=  
-{\g} \sum_{i<j}^n\wp(x_i-x_j)- {\g} \sum_{i<j}^n \partial R_{ij} \, \widehat{s}_{ij}.
\end{equation}
Under the specialisation $\lambda=\lambda^0$, the $a$-term becomes
\begin{equation}\label{a}
a=-{\g} \sum_{i<j}^n\wp(x_i-x_j)-{\g} \! \sum_{j (\ne 1)}^n \! \partial R_{1j}(x_1-x_j,\mu) \, \widehat{s}_{1j}-\,{\g}\!\sum_{1<i<j}^n \!r'_{ij} \, \widehat{s}_{ij}\,.
\end{equation}
To find the quantum Lax pair $\mathcal L, \mathcal A$, we need to calculate the action of $y_1$ \eqref{y1} and $a$ \eqref{a} on $M'=e'M$. Note that because $\widehat{s}_{ij}=\id$ on $M'$ for $i,j>1$, $a$ can be replaced by
\begin{equation}\label{a1}
-\, \,{\g} \sum_{i<j}^n\wp(x_i-x_j)-\,\,{\g} \! \sum_{j (\ne 1)}^n \! \partial R_{1j}(x_1-x_j,\mu) \, \widehat{s}_{1j}-\,{\g}\!\sum_{1<i<j}^n \!r'_{ij}\,.
\end{equation}
Following the recipe in \cite[Lemma~2.3]{C19}, we obtain the $n\times n$ operator-valued matrices $\mathcal L, \mathcal A$ in the form
\begin{equation*}
\mathcal L_{ij}=
\begin{cases}
-{\g} \, R_{ij}(x_i-x_j, \mu) \ &\text{for}\ i\ne j\\
\hat p_i\ &\text{for}\ i=j
\end{cases}\,\,,\qquad 
\end{equation*}
\begin{equation*}
\mathcal A_{ij}=
\begin{cases}
-\, {\g}\,\partial R_{ij}(x_i-x_j, \mu) \ &\text{for}\ i\ne j\\
-\,{\g}\left(\sum_{k<l}^n\wp(x_k-x_l)+\sum_{\substack{k<l \\ k,l\neq i}} r'_{kl}\right) &\text{for}\ i=j
\end{cases}\,\,,
\end{equation*}
which coincides with the quantum Lax pair found in \cite[Proposition 3.1]{GZ}.
The classical Lax pair is obtained in the classical limit, resulting in
\begin{equation*}
L_{ij}=
\begin{cases}
-{\g} \, R_{ij}(x_i-x_j, \mu) \ &\text{for}\ i\ne j\\
p_i\ &\text{for}\ i=j
\end{cases}
\end{equation*}
and $A$ identical to $\mathcal A$. Both matrices depend on the spectral parameter, $\mu$. The scalar matrix $-g\sum_{k<l}^n\wp(x_k-x_l) \, \mathrm{Id}_n$ can be removed from $A$ since it commutes with $L$. Hence, we may take
\begin{equation*}
A_{ij}=
\begin{cases}
-\, {\g}\,\partial R_{ij}(x_i-x_j, \mu) \ &\text{for}\ i\ne j\\
-\,{\g}\sum_{\substack{k<l \\ k,l\neq i}} r'_{kl} &\text{for}\ i=j
\end{cases}\,.
\end{equation*}
Note that $A_{ii}=-g\sum_{k<l}^n r'_{kl}+g\sum_{k(\ne i)}^nr'_{ik}$, however the first sum cannot be removed because it represents the diagonal matrix $-g \, (\sum_{k<l}^n r'_{kl}) \, \mathrm{Id}_n$ which does not commute with $L$.
This classical Lax pair $L, A$ matches the $R$-matrix Lax pair found in \cite{LOZ}.

\section{{\it R}-matrix quantum spin chain from freezing}\label{sc0}

To pass from the $R$-matrix Hamiltonians to a quantum spin chain, we apply the procedure known as ``freezing'', which involves placing classical particles at an equilibrium. This heuristic method goes back to Polychronakos \cite{P}. Recently, it was put on a firm ground to demonstrate integrability of the Inozemtsev spin chain in \cite{C24}; it was also given a broader treatment and justification within the framework of hybrid integrable systems in \cite{LRS}.
We will follow the approach of \cite{C24}.

\medskip

Recall that in \cite{SZ}, Sechin and Zotov proposed the following spin chain Hamiltonian $\H_2\in\End\, U$:
\begin{equation}\label{h2}
\H_2=\sum_{i<j}\bar r_{ij}\,,\qquad \bar r_{ij}:=r_{ij}\left(\frac{i-j}{n}\right)\,.  \end{equation}
{An overview of these and related spin chains can be found in \cite{KL1}.} They further conjectured that $\H_2$ commutes with
\begin{equation}\label{h3}
\H_3=\sum_{i<j<k} [\bar r'_{ij},\bar r_{ik}+\bar r_{jk}]\,,\qquad \bar r'_{ij} \coloneqq r'_{ij}\left(\frac{i-j}{n}\right)\,.
\end{equation}

The commutativity $[\H_2, \H_3]=0$ was checked in \cite{SZ} numerically for $d=2$, and in \cite{MZ} it was deduced from the commutativity of relativistic analogues of $\H_2, \H_3$. 

We can now obtain this as part of the following construction of commuting spin Hamiltonians $\H_2, \dots, \H_n$. Recall that by \eqref{spinsep1}, $\rH_r=H_r+\hbar \, \widehat A_r$ . Here each $\widehat A_r$ is an element of $A_\hbar\otimes \End\, U$, which can be thought of as a matrix (acting on $U$) whose entries belong to the algebra of differential operators $A_\hbar$ \eqref{ah}. Its classical limit, $\eta_0(\widehat A_r)$, is an element of $A_0\otimes \End\, U$ where 
$A_0$ is the algebra of functions of $x_i, p_i$ \eqref{a0}. Hence, it can be evaluated at a particular point $(x^\star, p^\star)$. For our purposes, we need to evaluate at an equilibrium $(x^\star, p^\star)$ of $h(x,p)$ from \eqref{cmc} 
given by 
\begin{equation}
  x^*=(x^*_i)\,,\quad p^*=(p^*_i)\,,\qquad x_i^*=\frac{i}{n}\,,\quad p_i^*=0\,,\quad i=1,\dots, n.
\end{equation}
\begin{prop}
Define spin Hamiltonians $\H_r\in\End\, U$ by 
\begin{equation}
\mathcal H_r \coloneqq \eta_0(\widehat A_r)\big|_{(x,p)=(x^\star, p^\star)}\,,\quad r=2,\dots, n\,.
\end{equation}
Then we have $[\H_r, \H_s]=0$ for all $r,s$. 
\end{prop}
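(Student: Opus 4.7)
The plan is to descend the commutativity $[\rH_r, \rH_s] = 0$ from Proposition~\ref{mainsp} to the identity $[\H_r, \H_s] = 0$, by extracting a semiclassical (Poisson-commutator) identity in $A_0 \otimes \End\,U$ and then evaluating at the freezing point $(x^\star, p^\star)$. Using the decomposition $\rH_r = H_r + \hbar\,\widehat A_r$ from Corollary~\ref{spinsep} together with $[H_r, H_s] = 0$ (the $H_r$ being commuting scalar quantum eCM Hamiltonians), the vanishing of $[\rH_r, \rH_s]$ rewrites as
\[
\hbar\bigl([H_r, \widehat A_s] - [H_s, \widehat A_r]\bigr) + \hbar^2\,[\widehat A_r, \widehat A_s] = 0 .
\]
Each commutator $[H_r, \widehat A_s]$ is divisible by $\hbar$, since the scalar $H_r$ acts on the matrix-valued $\widehat A_s$ only through the derivation $\hat p_i = \hbar\,\partial_i$ (matrix entries commute with scalar coefficients). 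Dividing by $\hbar^2$ and applying $\eta_0$ yields
\[
\{h_r, \alpha_s\} - \{h_s, \alpha_r\} + [\alpha_r, \alpha_s] = 0 , \qquad \alpha_r \coloneqq \eta_0(\widehat A_r) ,
\]
in $A_0 \otimes \End\,U$.

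The crux is to show that both Poisson brackets above vanish upon evaluation at $(x^\star, p^\star)$, so that only $[\H_r, \H_s] = [\alpha_r, \alpha_s]|_{(x^\star, p^\star)}$ survives. The plan rests on three structural observations. First, $\partial h_r/\partial x_i|_{(x^\star, p^\star)} = 0$ for all $i$: at $p^\star = 0$ only the $p$-free part of $h_r$ contributes, and that part is a $W$- and translation-invariant function of $x$; its gradient at the $\Z/n$-symmetric equidistant configuration $x^\star$ must be both constant in $i$ (by cyclic symmetry) and of zero sum (by translation invariance), hence zero. Secondly, $\partial h_r/\partial p_i|_{(x^\star, p^\star)}$ is independent of $i$ (call it $C_r$), by the analogous cyclic-symmetry argument applied to the linear-in-$p$ part of $h_r$. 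Thirdly, each $\alpha_s$ is translation invariant in $x$, since the $R$-matrix Dunkl operators $y_i$ depend on $x$ only through differences $x_i - x_j$; in particular $\sum_i \partial \alpha_s/\partial x_i \equiv 0$. Combining these gives
\[
\{h_r, \alpha_s\}\big|_{(x^\star, p^\star)} = C_r \sum_i \frac{\partial \alpha_s}{\partial x_i}\bigg|_{(x^\star,p^\star)} - 0 = 0 ,
\]
and swapping $r \leftrightarrow s$ handles the second Poisson bracket. The semiclassical identity then collapses to $[\H_r, \H_s] = 0$.

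The principal technical point is verifying these structural facts uniformly in $r$: for $r = 2$ this reduces to the classical equilibrium property of the eCM Hamiltonian at the equidistant configuration (with $\wp'(k/n)$ summing to zero via the pairing $k \leftrightarrow n-k$), but for higher $r$ one must invoke the explicit form of $h_r$, cf.\ \cite[(4.14)]{OP1}, and check that both its $p$-degree-zero and $p$-degree-one components inherit $W$- and translation-invariance so that the cyclic-symmetry argument applies to each. This step parallels the freezing argument in \cite[Sec.~5]{C24} and fits into the general framework of \cite{LRS}.
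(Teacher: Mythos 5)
Your argument is correct and is essentially the paper's own proof: the paper simply cites \cite[Theorem 5.5]{C24}, and what you have written is precisely that freezing argument unpacked — extract the order-$\hbar^2$ identity $\{h_r,\alpha_s\}-\{h_s,\alpha_r\}+[\alpha_r,\alpha_s]=0$ and kill the Poisson brackets at the equilibrium using $\partial_{x_i}h_r|_\star=0$, the $i$-independence of $\partial_{p_i}h_r|_\star$, and translation invariance of $\alpha_s$. The one refinement worth noting is that $\eta_0(\rH_r)$ is only claimed to lie in $\c[h_1,\dots,h_n]$ rather than to equal $h_r$, but your structural facts are stable under taking polynomials in the $h_j$, so the argument goes through unchanged.
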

The proof repeats that of \cite[Theorem 5.5]{C24} verbatim, cf.\ \cite[Sec.~9]{LRS}. 
The reason to exclude $r=1$ is that in that case $\widehat A_1=0$ and so $\H_1=0$.
\qed

\medskip

For $r=2,3$ we recover from \eqref{spinsep2}, \eqref{spinsep3} the above $\H_2, \H_3$, up to an additive constant term (a constant multiple of $\mathrm{Id}_U$).

\section{Spectral deformation}

Here we allow spectral variables into Hamiltonians and generalise the previous constructions analogously to \cite[Sec. 7]{C24}.

\subsection{}First, let us construct a dynamical spin model incorporating dependence on $\lambda$.
It will be convenient to work with an additional copy $\Vc\cong V$ of the space $V$ with $\lambda\in \Vc$, and denote by $\Sc$ a copy of the symmetric group acting on $\Vc$.  
Denote by $\D_U^\vee(V)$ the algebra of matrix differential operators on $V$ whose coefficients also depend on $\lambda$; it is generated by (operators of multiplication by) functions $f\in \c(V\times \Vc)$ and the derivations $\partial_i=\partial/\partial x_i$. We have two commutative $W$-actions on $\D_U^\vee(V)$: the action \eqref{act} and the $\Sc$-action on $\lambda$, with $w^\vee.f(x, \lambda):=f(x, w^{-1}.\lambda)$ for $f\in \c(V\times \Vc)$. Hence, we can form the crossed product 
\begin{equation*}
\A^\vee \coloneqq \D_U^\vee(V)*(\Wh\times\Sc)\,.
\end{equation*}  	
Elements of $\A^\vee$ can be uniquely written as
\begin{equation*}
a=\! \sum_{w_1, w_2\in W} \!\!\! a_{w_1w_2} \, \hw_1\otimes w_2^{\vee}\quad\text{with}\ a_{w_1w_2}\in\D_U^\vee(V)\,.
\end{equation*}
Define a linear map 
\begin{align}\nonumber
{\Res}^\vee&:\ \A^\vee\,\to\, \D_U^\vee(V)*\Sc\,,\\ 
\label{reshatl}
&\sum_{w_1,w_2\in S_n}a_{w_1w_2} \, \hw_1\otimes w_2^\vee\ 
\mapsto \sum_{w_1,w_2\in S_n}a_{w_1w_2} \, (w_2 \, w_1^{-1})^\vee\,. 
\end{align}
Equivalently, ${\Res}^\vee(a)$ is the unique element $L_a\in\D_U^\vee(V)*\Sc$ such that
\begin{equation}
a \, e=L_a \, e\,,\qquad e \coloneqq \frac{1}{n!}\sum_{w\in W} \hw\otimes w^\vee\,.
\end{equation}
The group $W$ acts on $\A^\vee$ and $\D_U^\vee(V)*\Sc$ by conjugation, 
\begin{equation}\label{acc}
a\mapsto (\hw\otimes w^\vee) \, a \, (\hw\otimes w^\vee)^{-1}\qquad\forall\ w\in W\,.
\end{equation}
It is easy to check that the map ${\Res}^\vee$ is $W$-equivariant, hence, it can be restricted on the subspaces of $W$-invariants, analogously to Lemma \ref{alg}.   
\begin{lemma}
The restriction ${\Res}^\vee \colon (\A^\vee)^{W}\to (\D_U^\vee(V)*\Sc)^{W}$ of the map \eqref{reshatl} is an algebra homomorphism.
\end{lemma}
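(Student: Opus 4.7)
The plan is to adapt the standard proof of the Heckman-type Lemma~\ref{alg}. First I would check that $e = \frac{1}{n!}\sum_{w\in W}\widehat{w}\otimes w^\vee$ is an idempotent in $\A^\vee$: because $\widehat{w}$ and $v^\vee$ live in commuting copies of the symmetric group, the product $e^2$ collapses to $e$ after reindexing by $u = w_1 w_2$. Then I would verify that the formula \eqref{reshatl} for $L_a := {\Res}^\vee(a)$ agrees with the characterisation $a\,e = L_a\, e$ in $\A^\vee$, and that $L_a \in \D_U^\vee(V)*\Sc$ is uniquely determined by this equation. Uniqueness rests on injectivity of right multiplication by $e$ on $\D_U^\vee(V)*\Sc$: writing $L = \sum_v L_v\, v^\vee$, a direct expansion of $Le$ in the monomials $\widehat{u}\otimes(vu)^\vee$ shows $Le = 0$ forces every $L_v$ to vanish.

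The $W$-equivariance of ${\Res}^\vee$ is a short bookkeeping check. Conjugating $a = \sum a_{w_1 w_2}(\widehat{w_1}\otimes w_2^\vee)$ by $\widehat{u}\otimes u^\vee$ reindexes the coefficients as $b_{w_1',w_2'} = u.a_{u^{-1}w_1' u,\, u^{-1}w_2' u}$, and substituting into \eqref{reshatl} recovers precisely the image of ${\Res}^\vee(a)$ under the diagonal $W$-action on $\D_U^\vee(V)*\Sc$. In particular ${\Res}^\vee$ sends invariants to invariants, so only multiplicativity on $(\A^\vee)^W$ remains.

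The crucial observation is that any $a\in(\A^\vee)^W$ commutes with $e$: summing the identity $(\widehat{w}\otimes w^\vee)\,a = a\,(\widehat{w}\otimes w^\vee)$ over $w$ yields $ea = ae$. For $a,b \in (\A^\vee)^W$ I would then run the chain
\[
L_a L_b\,e \;=\; L_a(L_b e) \;=\; L_a(be) \;=\; L_a(eb) \;=\; (L_a e)\,b \;=\; (ae)\,b \;=\; a(be) \;=\; (ab)\,e \;=\; L_{ab}\,e,
\]
using the defining identity $L_x\,e = x\,e$ applied to $x=a,b,ab$, associativity in $\A^\vee$, and the commutation $be = eb$ coming from $W$-invariance. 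Since both $L_a L_b$ and $L_{ab}$ lie in the subalgebra $\D_U^\vee(V)*\Sc$, the injectivity of right multiplication by $e$ on that subalgebra forces $L_a L_b = L_{ab}$. The main subtlety is arranging the central chain so that each move is either an instance of $L_x e = xe$ or the commutation $be = eb$; once this is in place the argument mirrors the proof of Lemma~\ref{alg} verbatim.
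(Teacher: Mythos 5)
Your argument is correct and is exactly the route the paper intends: the paper itself only sets up the idempotent $e$ and the characterisation $a\,e=L_a\,e$, then refers to the standard Heckman-type argument of Lemma~\ref{alg} (i.e.\ \cite[Lemma 4.1]{C24}), which is precisely the chain $L_aL_b\,e=a(be)=(ab)e=L_{ab}\,e$ combined with $ea=ae$ for invariant $a$ and injectivity of right multiplication by $e$ on $\D_U^\vee(V)*\Sc$. All the individual verifications you list (idempotency of $e$, the reindexing showing $\Res^\vee(a)e=ae$, the equivariance bookkeeping, and the linear-independence argument for uniqueness) check out.
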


Now, for any $W$-invariant function $f(x,p)$ in $A_0$ \eqref{a0}, the result of substitution \eqref{subs} is a $W$-invariant element $f(\lambda, y)$ of $\D_U^\vee(V)*\Wh\subset \A^\vee$.
Indeed, \begin{equation}\label{inv1}
    (\hw\otimes w^\vee) \, f(\lambda, y) \, (\hw\otimes w^\vee)^{-1}=f(\lambda, y)\,,
\end{equation}
by equivariance of $y_i$ and $W$-invariance of $f(x,p)$, cf. \eqref{inv}. These elements pairwise commute because $y_i$ commute. Hence, the following result.

\begin{prop}
Define $\L^\vee_f:=\Res^\vee f(\lambda, y)$ for $f\in A_0^W$. The elements $\L^\vee_f$ are pairwise commuting $W$-invariant elements of $\D_U^\vee(V)*\Sc$.    
\end{prop}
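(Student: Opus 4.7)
The plan is to reduce the proposition to the immediately preceding lemma (that $\Res^\vee$ restricted to $W$-invariants is an algebra homomorphism) together with Lemma \ref{lemma1}. The proof should be quite short; essentially, the substitution $x_i\mapsto\lambda_i$, $p_i\mapsto y_i$ transports a commutative, $W$-stable family from $A_0$ into $(\A^\vee)^W$, and $\Res^\vee$ transports it further into $(\D_U^\vee(V)*\Sc)^W$.

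First, I would verify that the substitution $f\mapsto f(\lambda,y)$ is a well-defined algebra homomorphism from $A_0$ into $\A^\vee$. The Dunkl operators $y_i$ commute pairwise by Lemma \ref{lemma1}(1), and each $y_i$ commutes with multiplication by any $\lambda_j$ because $y_i$ only involves the $x$-derivatives $\hat p_i$ together with matrix coefficients depending on the $x$ variables. Hence polynomials in $\lambda_i$ and $y_i$ are unambiguously defined and respect the relations of $A_0$; in particular, for any $f,g\in A_0$ we have $[f(\lambda,y),\,g(\lambda,y)]=0$.

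Next, for $f\in A_0^W$ the element $f(\lambda,y)$ lies in $(\A^\vee)^W$ with respect to the conjugation action \eqref{acc}. This is precisely the assertion of equation \eqref{inv1}, which in turn follows from the equivariance $\hw\,y_\xi(\lambda)\,\hw^{-1}=y_{w.\xi}(w.\lambda)$ established in Lemma \ref{lemma1}(2), combined with $W$-invariance of $f$ as a polynomial in $x$ and $p$: the conjugation by $\hw\otimes w^\vee$ simultaneously permutes the $\lambda_i$ (via $w^\vee$) and transforms the $y_i$ according to $\hw$, and these two effects cancel inside the symmetric polynomial $f$.

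Finally, applying the preceding lemma to these elements gives the result at once: for $f,g\in A_0^W$,
\begin{equation*}
\L^\vee_f\,\L^\vee_g \;=\; \Res^\vee\!\bigl(f(\lambda,y)\,g(\lambda,y)\bigr) \;=\; \Res^\vee\!\bigl(g(\lambda,y)\,f(\lambda,y)\bigr) \;=\; \L^\vee_g\,\L^\vee_f,
\end{equation*}
and each $\L^\vee_f=\Res^\vee(f(\lambda,y))$ lies in $(\D_U^\vee(V)*\Sc)^W$ since $\Res^\vee$ sends $W$-invariants to $W$-invariants. There is no real obstacle here: the proposition is a formal consequence of Lemma \ref{lemma1} and the algebra-homomorphism property of $\Res^\vee$. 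The only mildly delicate point, already handled upstream, is verifying that the equivariance in Lemma \ref{lemma1}(2) really amounts to invariance under the combined action \eqref{acc}; once the actions on $\D_U^\vee(V)$, on the $\lambda$ variables, and on $\Wh$ are unpacked this is tautological, but it is the one place where a careful reader should pause.
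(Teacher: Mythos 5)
Your proof is correct and follows exactly the paper's route: the paper likewise observes that $f(\lambda,y)$ is $W$-invariant via \eqref{inv1} (equivariance of the $y_i$ plus $W$-invariance of $f$), that these elements commute because the $y_i$ do, and then applies the algebra-homomorphism property of $\Res^\vee$ on $W$-invariants. Nothing is missing.
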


\subsection{}\label{hcheck}
Among $\L^\vee_f$, we have analogues of the $R$-matrix eCM Hamiltonians. First, from \eqref{y2}, we find the following generalisation of the Hamiltonian \eqref{reCMS}: 
\begin{equation}
    \Hsf^\vee=\sum_{i=1}^n \hat p_i^2  
-2\,{\g}^2\sum_{i<j}^n\wp(x_i-x_j)- 2\,\g\,\hbar \sum_{i<j}^n \partial R_{ij}\, {s}^\vee_{ij}.
\end{equation}
Here $\partial R(z, \mu)=\frac{\partial}{\partial z}R(z, \mu)$, $\partial R_{ij}=\partial R_{ij}(x_i-x_j, \lambda_i-\lambda_j)$ and $s^\vee_{ij}$ acts only on $\lambda\in V^\vee$. In the limit $\lambda\to 0$, $s^\vee_{ij}=\id$ and we get the $R$-matrix Hamiltonian \eqref{reCMS}.

Next, define the \emph{principal Hamiltonians}  
\begin{equation}
    \Hsf^\vee_r:=\Res^\vee(h_r(\lambda, y))\,,\qquad r=1,\dots, n.
\end{equation}
Here $h_r(x,p)$ are the scalar eCM Hamiltonians, see Sec.~\ref{subsection:cecms}.
By a direct calculation, the first three principal Hamiltonians are:
\begin{align*}
\Hsf^\vee_1=&\hat p_1+\dots +\hat p_n,\\
\Hsf^\vee_2=&\sum_{i<j}^n\left(\hat p_i\hat p_j+g^2\wp(x_i-x_j)+g\hbar \,\partial R_{ij}\,s^\vee_{ij}\right),\\
\Hsf^\vee_3=&\sum_{i<j<k}(\pp_i\pp_j\pp_k+{\g}^2\wp(x_i-x_j)\pp_k+{\g}^2\wp(x_j-x_k)\pp_i+{\g}^2\wp(x_k-x_i)\pp_j)\\
+&
\hbar g\sum_{i<j<k}(\partial R_{ij}\pp_k {s}^\vee_{ij}+ \partial R_{jk}\pp_i{s}^\vee_{jk}+ \partial R_{ki}\pp_j{s}^\vee_{ki})
\\+&\hbar{\g}^2\sum_{i<j<k}\left(\partial R_{ij}\, R_{ik}^{jk}-R_{jk}\, \partial R_{ij}^{ik}\right){(ijk)^\vee}
+\hbar g^2\sum_{i<j<k}\left(\partial R_{ij}\,R_{jk}^{ik}-R_{ik}\,\partial R_{ij}^{kj}\right){(kji)^\vee}\,.
\end{align*}
In the limit $\lambda\to 0$, $\Hsf^\vee_r=\rH_r$, so we view $\Hsf^\vee_r$ as  \emph{deformations} of $\rH_r$.

\subsection{}\label{lcheck}
In contrast with the case $f=h_r(x,p)$, for an arbitrary $f\in A_0^W$ the expression $f(\lambda, y)$ would not be regular at $\lambda=0$. However, as we already explained in Remark \ref{lf}, one can use $f$ from the rational spherical Cherednik algebra $\BB_g\subset A_0^W$, and we have the following result. 

\begin{prop}\label{sphhamc}
The elements $\L^\vee_f=\Res^\vee(f(\lambda, y))$ for $f\in \BB_g$, are $W$-invariant elements of $\D_U^\vee(V)*\Sc$, regular at $\lambda=0$.
\end{prop}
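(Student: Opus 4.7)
The plan is to prove two claims separately: $W$-invariance of $\L^\vee_f$, and its regularity at $\lambda=0$.

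For the $W$-invariance, I would first verify the intertwining identity
\begin{equation*}
(\hw\otimes w^\vee)\,y_i(\lambda)\,(\hw\otimes w^\vee)^{-1}=y_{w(i)}(\lambda)\,,
\end{equation*}
by combining Lemma \ref{lemma1}(2) (conjugation by $\hw$ alone sends $y_i(\lambda)$ to $y_{w(i)}(w.\lambda)$) with the action of $w^\vee$, which sends $\lambda_k\mapsto\lambda_{w(k)}$ and thereby restores $w.\lambda$ to $\lambda$ in the arguments of $R_{ij}$. Together with the obvious $(\hw\otimes w^\vee)\,\lambda_i\,(\hw\otimes w^\vee)^{-1}=\lambda_{w(i)}$, this shows that the substitution $x\mapsto\lambda$, $p\mapsto y$ intertwines the natural $W$-action on $A_0$ with the conjugation action \eqref{acc} on $\A^\vee$. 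Hence for any $f\in\BB_g\subset A_0^W$ one has $f(\lambda,y)\in(\A^\vee)^W$, and applying the $W$-equivariant algebra map $\Res^\vee$ yields the desired $W$-invariance of $\L^\vee_f$.

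For the regularity at $\lambda=0$, I would mimic the proof of Proposition \ref{main}(1), with $h_r$ replaced by an arbitrary $f\in\BB_g$. The potential singularities of $f(\lambda,y)$ lie along the root hyperplanes $\lambda_i=\lambda_j$, all of which pass through $\lambda=0$, so it suffices to establish local regularity along each of them. Following \cite[Sec.~5.3]{EFMV}, this further reduces to verifying that, for every root $\alpha=e_i-e_j$,
\begin{equation*}
(\lambda_i-\lambda_j)\,y_\alpha\quad\text{and}\quad (y_\alpha)^2-\langle\alpha,\alpha\rangle\,\wp(\lambda_i-\lambda_j)\quad\text{are regular at }\lambda_i=\lambda_j\,.
\end{equation*}
Both statements have already been established in the proof of Proposition \ref{main}(1): the first is a direct consequence of the regularity property \ref{it:regular} of $R$, which ensures that $(\lambda_i-\lambda_j)R_{ij}$ is regular, while the second follows from the explicit formula \eqref{y2} for $h(\lambda,y)$ combined with the regularity of $y_i+y_j$.

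The main obstacle, and the only genuine content beyond Proposition \ref{main}(1), is justifying that these two local conditions suffice not merely for the commuting family $\{h_r\}$ but for the entire Poisson subalgebra $\BB_g$. The conceptual input is that $\BB_g$ is intrinsically characterised as precisely the subalgebra of $A_0^W$ whose elements give $\lambda$-regular expressions upon substitution into the scalar elliptic Dunkl operators, cf.\ \cite[Sec.~6.3--6.4]{EFMV}. Since our $R$-matrix Dunkl operators $y_i$ commute by Lemma \ref{lemma1}(1) and satisfy the same two local regularity identities as their scalar analogues, the filtration-and-induction argument of EFMV transports essentially verbatim to the $R$-matrix setting, producing a regular expression $f(\lambda,y)$ and hence a well-defined $\L^\vee_f\in(\D_U^\vee(V)*\Sc)^W$ that is regular at $\lambda=0$.
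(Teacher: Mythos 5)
Your proof is correct and follows essentially the same route as the paper, which simply notes that the $W$-invariance comes from the equivariance of the $y_i$ (cf.\ \eqref{inv1}) and that the regularity argument repeats the proof of Proposition \ref{main}(1), i.e.\ the reduction to the two local conditions on $y_\alpha$ from \cite[Sec.~5.3]{EFMV}. The only quibble is your claim that $\BB_g$ is characterised as \emph{precisely} the subalgebra yielding $\lambda$-regular substitutions — EFMV establish (and you only need) the one implication that elements of $\BB_g$ do so.
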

Proof repeats the proof of Proposition \ref{main}(1). \qed

Here are two examples. First, consider
$f(x,p)=\sum_{i=1}^n x_i \, p_i\in \BB_g$. This gives 
\begin{equation*}
f(\lambda, y)=\sum_{i=1}^n \lambda_i \, y_i=\sum_{i=1}^n \lambda_i \left(\pp_i-{\g}\sum_{j\ne i}R_{ij} \, \widehat{s}_{ij}\right)\,,
\end{equation*}
and, as a result,
\begin{equation}\label{ham1}
\L^\vee_f=\sum_{i=1}^n \lambda_i \pp_i- g\sum_{i<j}(\lambda_i-\lambda_j)R_{ij} \,{s}^\vee_{ij}\,.
\end{equation}
Next, we take $f\in \BB_g$ as follows:
\begin{equation}\label{f2}
f(x,p)=\sum_{i\ne j\ne k\ne i}x_i\left(p_j \, p_k+\frac{{\g}^2}{(x_{j}-x_k)^2}\right)\,.    
\end{equation}
Then
\begin{equation}
 f(\lambda, y)=\sum_{i\ne j\ne k\ne i}\lambda_i\left(y_j \, y_k+\frac{{\g}^2}{(\lambda_{j}-\lambda_k)^2}\right)\,.   
\end{equation}
From this,
\begin{align}\nonumber
\L_f^\vee&=\sum_{i\ne j\neq k\neq i}\lambda_i(\pp_j\pp_k +{\g}^2\,\wp(x_j-x_k))+
{\g} \sum_{i\ne j\neq k\neq i}(\lambda_i-\lambda_j)R_{ij}\pp_k{s}^\vee_{ij}+
\\\nonumber
&
+{\g} \hbar\sum_{i\ne j\neq k\neq i}\lambda_i\, \partial R_{kj}s_{kj}^\vee
\\\nonumber&-2{\g}^2\sum_{i<j<k}\left((\lambda_k-\lambda_i)R_{ik}R_{jk}^{ji}+(\lambda_k-\lambda_j)R_{ij}R_{ik}^{jk}\right){(ijk)^\vee}
\\\label{ham2}&+2{\g}^2\sum_{i<j<k}\left((\lambda_i-\lambda_k)R_{ij}R_{jk}^{ik}+(\lambda_k-\lambda_j)R_{jk}R_{ik}^{ij}\right){(kji)^\vee}\,.
\end{align}

\subsection{}\label{lcheckspin}
Now the spin chain Hamiltonians incorporating $\lambda$-variables are constructed similarly to \cite[Sec. 7]{C24}. 
We view the Hamiltonians $\Hsf_r^\vee$ and $\L_f^\vee$ as elements of $(\Ah\otimes \End\, U) * \Sc$ so we can take classical limits. We have a counterpart of \eqref{spinsep1}:
\begin{equation}\label{decl}
\Hsf^\vee_r=H_r+\hbar\,\Asf^\vee_q\,,\qquad\text{with}\ \Asf^\vee_q\in (\Ah\otimes\End\, U)*\Sc\,.
\end{equation}
We define \emph{principal} spin chain Hamiltonians by
\begin{equation}
\mathcal H_r^\vee \coloneqq \eta_0(\Asf^\vee_r)\big|_{(x,p)=(x^*, p^*)}\,,\quad r=2,\dots, n\,.
\end{equation}
For $r=2,3$ we use the expressions for $\Hsf^\vee_2, \Hsf^\vee_3$ to find that, up to a constant factor,
\begin{align*}
\mathcal H_2^\vee&=\, \sum_{i<j}^n\partial \bar R_{ij}\,\!{s}_{ij}^\vee\,,\qquad 
\partial \bar R_{ij}:=\partial R_{ij}\left(\frac{i-j}{n}, \lambda_i-\lambda_j\right)\,
\\
\mathcal H^\vee_3&=\sum_{i<j<k}(\partial \bar R_{ij}\, \bar R_{ik}^{jk}-\bar R_{jk}\, \partial \bar R_{ij}^{ik}){(ijk)^\vee}
+\sum_{i<j<k}
(\partial \bar R_{ij}\,\bar R_{jk}^{ik}-\bar R_{ik}\,\partial \bar R_{ij}^{kj}){(kji)^\vee}\,.
\end{align*}
Here $\bar R_{ij}^{kl}=R_{ij}\left(\frac{i-j}{n}, \lambda_k-\lambda_l\right)$ and
$\partial \bar R_{ij}^{kl}=\partial R_{ij}\left(\frac{i-j}{n}, \lambda_k-\lambda_l\right)$. The Hamiltonians $\H^\vee_2, \H^\vee_3$ are $\lambda$-deformations of \eqref{h2}-\eqref{h3}. 

We can also define \emph{additional} Hamiltonians by using $\L_f^\vee$ as follows:
\begin{equation}
\mathcal I^\vee_f:=\eta_0(\L^\vee_f)\big|_{(x,p)=(x^*, p^*)}\,,\quad f\in \BB_g\,.
\end{equation}
For example, taking $f=\sum_{i=1}^n x_i \, p_i$ gives 
\begin{equation}\label{ham1sp}
    \mathcal I^\vee_f=\sum_{i<j}(\lambda_i-\lambda_j)\bar R_{ij}s^\vee_{ij}\,.
\end{equation}
For $f$ in \eqref{f2}, we find from the formula for $\L_f^\vee$ that
\begin{align}\nonumber
\mathcal I_f^\vee&=\sum_{i\ne j\neq k\neq i}\lambda_i\wp_{jk}
\\\label{ham2sp}&-2\sum_{i<j<k}\left((\lambda_k-\lambda_i)\bar R_{ik}\bar R_{jk}^{ji}+(\lambda_k-\lambda_j)\bar R_{ij}\bar R_{ik}^{jk}\right){(ijk)^\vee}
\\\nonumber&+2\sum_{i<j<k}\left((\lambda_i-\lambda_k)\bar R_{ij}\bar R_{jk}^{ik}+(\lambda_k-\lambda_j)\bar R_{jk}\bar R_{ik}^{ij}\right){(kji)^\vee}\,.
\end{align}
Here $\wp_{jk}=\wp(\frac{j-k}{n})$, and $\bar R_{ij}$, $\bar R_{ij}^{kl}$ have the same meaning as in the formulas for $\H^\vee_{2}$, $\H^\vee_3$ above.

\begin{theorem} \label{intc}
The elements $\H_r^\vee$, $r=2,\dots, n$ and $\mathcal I_f^\vee$ with $f\in \BB_g$ form a commutative family of elements of $(\c(\Vc)\otimes \End\, U)*S_n^\vee$.
\end{theorem}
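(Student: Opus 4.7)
The plan is to deduce the commutativity after freezing from commutativity \emph{before} freezing, and then transfer it along the classical limit $\eta_0$ and the evaluation at the equilibrium $(x^*, p^*)$.

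By Lemma~\ref{lemma1} the Dunkl operators $y_i$ pairwise commute and the $\lambda_j$ are central, so for any $W$-invariant $f, g \in A_0^W$ (taking $f, g$ among the $h_r$ and elements of $\BB_g$) the substitutions $f(\lambda, y)$ and $g(\lambda, y)$ commute in $\A^\vee$; by equivariance of the $y_i$ they are also $W$-invariant under the diagonal action~\eqref{acc}. Applying $\Res^\vee$, which restricts to an algebra homomorphism on $W$-invariants, yields
\[
[\Hsf_r^\vee, \Hsf_s^\vee] \,=\, [\Hsf_r^\vee, \L_f^\vee] \,=\, [\L_f^\vee, \L_g^\vee] \,=\, 0 \qquad \text{in}\ (\D_U^\vee(V)*\Sc)^W.
\]
The equality $[\mathcal I_f^\vee, \mathcal I_g^\vee] = 0$ then follows by pure functoriality: the classical-limit map $\eta_0$ is an algebra homomorphism, and evaluation at $(x^*, p^*) \in V \times V^*$ extends $\Sc$-equivariantly (since $\Sc$ acts only on $\Vc$, not on $V$, $V^*$ or $\End\, U$) to an algebra homomorphism $(A_0^\vee \otimes \End\, U)*\Sc \to (\c(\Vc) \otimes \End\, U)*\Sc$, where $A_0^\vee = \c(V \times \Vc)[p_1, \dots, p_n]$. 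Composing, the vanishing of $[\L_f^\vee, \L_g^\vee]$ passes directly to $[\mathcal I_f^\vee, \mathcal I_g^\vee] = 0$.

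For the mixed commutators $[\H_r^\vee, \H_s^\vee]$ and $[\H_r^\vee, \mathcal I_f^\vee]$ I would invoke the freezing argument of \cite[Thm.~5.5]{C24} and \cite[Sec.~9]{LRS}, already applied in Section~\ref{sc0}. Using the decomposition $\Hsf_r^\vee = H_r + \hbar\, \Asf_r^\vee$ from~\eqref{decl} and the fact that $H_r \in \Ah^W$ is scalar (so commutes with $\Sc$ and $\End\, U$), one has $[H_r, X] = \hbar\,\{h_r, \eta_0(X)\} + O(\hbar^2)$ for every $X \in (\Ah \otimes \End\, U)*\Sc$. Expanding the vanishing commutators $[\Hsf_r^\vee, \L_f^\vee]$ and $[\Hsf_r^\vee, \Hsf_s^\vee]$, invoking $[H_r, H_s] = 0$, and extracting the leading $\hbar$-order, gives
\begin{align*}
\{h_r,\, L_f^\vee\} + [\eta_0(\Asf_r^\vee),\, L_f^\vee] &= 0, \\
\{h_r,\, \eta_0(\Asf_s^\vee)\} - \{h_s,\, \eta_0(\Asf_r^\vee)\} + [\eta_0(\Asf_r^\vee),\, \eta_0(\Asf_s^\vee)] &= 0,
\end{align*}
with $L_f^\vee := \eta_0(\L_f^\vee)$. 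The geometric crux is that $(x^*, p^*)$ with $x_i^* = i/n$, $p_i^* = 0$, is a common critical point of \emph{every} scalar eCM Hamiltonian $h_r$: vanishing momenta kill all $p$-derivatives, while the equispaced positions combined with the oddness of $\wp'$ modulo the period lattice (and the $\Z/n$ rotational symmetry of the configuration) annihilate the $x$-derivatives. Hence $\{h_r, F\}|_{(x^*, p^*)} = 0$ for all $F$, and specialising the two identities above at $(x^*, p^*)$ yields $[\H_r^\vee, \mathcal I_f^\vee] = 0$ and $[\H_r^\vee, \H_s^\vee] = 0$, completing the proof.

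The main obstacle I expect is a clean verification of the common critical-point property for all $h_r$ with $r \geq 3$: it is a standard input in freezing procedures, but deserves explicit checking given the elliptic potentials and the role it plays in killing the Poisson-bracket terms above. The remaining steps are formal and essentially transcribe \cite[Thm.~5.5]{C24} into the $\lambda$-deformed setting.
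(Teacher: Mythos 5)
Your overall route --- commutativity of the substituted elements $f(\lambda,y)$ before applying $\Res^\vee$, functoriality of $\eta_0$ and of evaluation at $(x^*,p^*)$ for the commutators $[\mathcal I_f^\vee,\mathcal I_g^\vee]$, and an order-by-order expansion in $\hbar$ for the mixed commutators --- is exactly the strategy of the proof the paper defers to (\cite[Thms.~5.5, 5.7]{C24}), and the two displayed identities you extract at leading order in $\hbar$ are correct. However, your ``geometric crux'' is false as stated: $(x^*,p^*)$ with $x_i^*=i/n$, $p_i^*=0$ is \emph{not} a common critical point of all the $h_r$. Already for $r=3$,
\[
\frac{\partial h_3}{\partial p_l}\Big|_{(x^*,p^*)}=\g^2\!\!\sum_{\substack{i<j\\ i,j\neq l}}\!\wp\Bigl(\tfrac{i-j}{n}\Bigr)\neq 0,
\]
because the terms $\wp(x_i-x_j)\,p_k$ are linear in $p$ and survive differentiation at $p=0$; the same happens for every odd $r\geq 3$. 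Consequently $\{h_r,F\}|_{(x^*,p^*)}$ does \emph{not} vanish for arbitrary $F$, and the final specialisation step in your argument collapses.

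The repair, which is the actual content of the freezing argument, is the following weaker but sufficient statement: at $(x^*,p^*)$ one has $\partial_{x_i}h_r=0$ for all $i$ (by the symmetry of the equispaced configuration, as you note) and $\partial_{p_i}h_r=c_r$ with $c_r$ \emph{independent of} $i$, so the Hamiltonian vector field of $h_r$ at the equilibrium equals $c_r\sum_i\partial_{x_i}$, the generator of simultaneous translation of all positions. Hence $\{h_r,F\}|_{(x^*,p^*)}=c_r\sum_i\partial_{x_i}F\,|_{(x^*,p^*)}$, which vanishes precisely when $F$ depends on $x$ only through the differences $x_i-x_j$. This translation invariance does hold for $\eta_0(\Asf^\vee_s)$ and for $L_f^\vee=\eta_0(\L_f^\vee)$: their $x$-dependence enters only through $R(x_i-x_j,\cdot)$, $\partial R(x_i-x_j,\cdot)$ and $\wp(x_i-x_j)$, since in $f(\lambda,y)$ the variables $x_i$ are replaced by $\lambda_i$ and the Dunkl operators involve $x$ only through differences. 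With this substitute, your two identities do specialise to $[\H_r^\vee,\H_s^\vee]=0$ and $[\H_r^\vee,\mathcal I_f^\vee]=0$. So the gap is localised and fixable, but the critical-point claim you lean on must be replaced by the translation-direction argument.
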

\begin{proof}
The proof repeats verbatim the proofs in \cite[Sec.~5.3]{C24}, see in particular Theorems 5.5, 5.7 in {\it loc. cit.}.    
\end{proof}

\subsection{} To interpret the elements $\H_r^\vee$, $\mathcal I_f^\vee$ as Hamiltonians of a spin chain, one needs to choose a representation of $(\c(\Vc)\otimes\End\, U)*S_n^\vee$ by appropriately enlarging the spin space $U$. A natural choice is
the space $\c(\Vc)\otimes U$ but the drawback is that it is infinite-dimensional.
Note that the action on $\c(\Vc)$ only involves permutations $w^\vee$ for $w\in W$ and multiplication by functions of $\lambda\in\Vc$. Therefore, it can be restricted onto a finite, $W$-invariant set of $\lambda$'s. 

For example, take a generic point $\epsilon=(\epsilon_1,\dots, \epsilon_n)\in\Vc$. Its $W$-orbit consists of $|W|=n!$ points 
$\sigma.\epsilon=(\epsilon_{\sigma(1)},\dots, \epsilon_{\sigma(n)})$, $\sigma\in W$. Let us replace $\c(\Vc)\otimes U$ with the direct sum of $|W|$ copies of $U$:
\begin{equation}
    U^\vee: =
    \oplus_{\sigma\in W} U^{(\sigma)}\,,\qquad\text{with}\quad  U^{(\sigma)}\cong U\quad \forall\ \sigma\,.
\end{equation}
We make this space into a $(\c(\Vc)\otimes\End\, U)*S_n^\vee$-module, with $w^\vee$ acting by permuting the summands, $w^\vee:\, U^{(\sigma)}\overset{\sim}\mapsto U^{(w\sigma)}$, and with $f(\lambda)$ acting on each summand $U^{\sigma}\cong U$  by an operator $f(\sigma.\epsilon)\in\End\, U$. In other words,
any $f\in\c(V^\vee)\otimes \End\, U$ acts on $U^\vee$ by 
$\oplus_{\sigma\in W}f(\sigma.\epsilon)\mathrm{Id}_{U^{(\sigma)}}$. For instance, a summand in $\H^\vee_2$,   
\begin{equation}
 \partial \bar R_{ij}\,\!{s}_{ij}^\vee =\partial R_{ij}\left(\frac{i-j}{n}, \lambda_i-\lambda_j\right){s}_{ij}^\vee\,,   
\end{equation}
swaps $U^{(\sigma)}$ and $U^{s_{ij}\sigma}$ for each $\sigma$ and then applies $\partial R_{ij}\left(\frac{i-j}{n}, \epsilon_{\sigma(i)}-\epsilon_{\sigma(j)}\right)$ to each direct summand $U^{(\sigma)}\subset U^\vee$.   

\medskip

Because $\partial R_{ij}\left(\frac{i-j}{n}, \mu\right)$
is regular at $\mu=0$, we can degenerate the above setup and use non-generic $W$-orbits. Namely, take an arbitrary $\epsilon=(\epsilon_1,\dots, \epsilon_n)\in\Vc$ such that all the Hamiltonians are regular at $\lambda=\epsilon$. Let $\Sigma=\{\sigma\}\subset \Vc$ be the $W$-orbit of $\epsilon$. Consider the space 
\begin{equation}
    U^\vee: =
    \oplus_{\sigma\in \Sigma} U^{(\sigma)}\,,\qquad\text{with}\quad  U^{(\sigma)}\cong U\quad \forall\ \sigma\in\Sigma\,.
\end{equation}
We can make it into a $(\c(\Vc)\otimes\End\, U)*S_n^\vee$-module, with $w^\vee$ acting by $w^\vee:\, U^{(\sigma)}\overset{\sim}\mapsto U^{(w.\sigma)}$, and with $f(\lambda)$ acting on each summand $U^{\sigma}\cong U$  by an operator $f(\sigma)\in\End\, U$. In other words,
any $f\in\c(V^\vee)\otimes \End\, U$ acts on $U^\vee$ by 
$\oplus_{\sigma\in \Sigma}f(\sigma)\mathrm{Id}_{U^{(\sigma)}}$. 

For example, taking $\epsilon=(\mu,0,\dots, 0)$ with $\mu\ne 0$, its orbit has length $n$ so we can denote $\sigma_i:=(0,\dots, \mu,\dots, 0)$, with $\mu$ in the $i$th place. Then 
\begin{equation}
    U^\vee: =
    \oplus_{i=1}^n U^{(i)}\,,\qquad\text{with}\quad  U^{(i)}:=U^{(\sigma_i)}\cong U\quad \forall\ i\,.
\end{equation}
A permutation $w^\vee$ acts on $U^\vee$ by simply permuting the summands, $w^\vee:\,  U^{(i)}\overset{\sim}\mapsto U^{(w(i))}$, and any $\End\, U$-valued function $f(\lambda)$ acts on $U^{(i)}$ as $f(\sigma_i)$. Choosing $\mu=0$ gives the trivial $W$-orbit $\Sigma=\{0\}$, in which case we recover the spin chain described in Sec.~\ref{sc0}.  

\begin{remark}
    In the same way, one can specialise $\lambda$ in the dynamical Hamiltonians $\Hsf _r^\vee$ and $\mathsf L_f^\vee$ from Sec.~\ref{hcheck}, \ref{lcheck}.
\end{remark}

\begin{remark}\label{lcheckf} We have explicit examples of the additional Hamiltonians $\L^\vee_f$ and $\mathcal I_f$ calculated in Sec.~\ref{lcheck}, \ref{lcheckspin}. Let us see how they behave in the limit $\lambda\to 0$.
Using \eqref{expR}, we find that \eqref{ham1} at $\lambda=0$ is a constant multiple of $\sum_{i<j}s^\vee_{ij}$ which acts as a constant because $s^\vee_{ij}=\id$ when $\lambda$ is set to zero. 
Similarly, evaluating \eqref{ham2} at $\lambda=0$ gives, after some computations,
and replacement $w^\vee=\id$, a constant multiple of $\Hsf_1=\sum_{k}\pp_k$. As a consequence, the operator \eqref{ham1sp} reduces to a constant and \eqref{ham2sp} vanishes under $\lambda\to 0$. We do not know if this is a more general property or just happens in small examples. Note that for the Inozemtsev spin chain treated in \cite{C24}, $R(z,\mu)=\phi(z,\mu)P$ so its residue at both $\mu=0$ and $z=0$ is the permutation operator, $P$. This changes the expressions for the Hamiltonians $\L^\vee_f$ at $\lambda\to 0$, making them nontrivial. 
\end{remark}

\section{Trigonometric case}\label{trigcase}
\subsection{}The above constructions remain valid for the (non-degenerate) trigonometric solutions to the AYBE, classified in \cite{S, Pol2}. We refer the reader to \cite{Pol2} for their construction and explicit expressions, see also
\cite{KZ} for a discussion of interesting special cases. Each of these solutions is determined by a so-called Belavin--Drinfeld structure on a finite set $S$ of size $N$ \footnote{To match the AYBE in \cite[(0.1)]{Pol2} with the equation \eqref{eq:AYBE}, one needs to swap the variables and use the skew-symmetry.}.
Let $r(u,v)\in \End\,(\c^N\otimes\c^N)$ be as given in \cite[Theorem~0.1]{Pol2}\footnote{It is related to $R^\hbar(z)$ in \cite{KZ} by $R^\hbar(z)=\frac{N}{2}\, r(2\hbar/N, 2z/N)$.}. 
If we define 
\begin{equation}
    R(z, \mu):=\,\mathrm{i}\,r(\mathrm{i}\mu, \mathrm{i}z)\in \End\,(\c^N\otimes\c^N)\,,
\end{equation}
then from \cite[(0.1),(0.2),(0.6), (0.7)]{Pol2} we see that $R$ has all the properties \ref{it:skew}--\ref{it:regular}, with the unitarity \ref{it:unitarity} in the form 
\begin{equation}
R(z,\mu) \, R_{21}(-z,\mu) =\left(\frac{1}{4 \sin^{2}\frac{\mu}{2}}-\frac{1}{4\sin^{2}\frac{z}{2}} \right)\, \mathrm{Id}\,.    
\end{equation}
From the explicit formula \cite[Theorem 0.1]{Pol2} for $r(u,v)$ we also see that, as a function of $\mu$, $R(z,\mu)$ has a period $2\pi N$, with simple poles at $\mu\in 2\pi\Z$. By a direct check based on the explicit formula, 
\begin{equation}\label{transtrig}
    R(z,\mu+2\pi)=(Q^{-1}\otimes \mathrm{Id})\,R(z,\mu)\,(\mathrm{Id}\otimes  Q)
\end{equation}
for some constant diagonal $N\times N$ matrix $Q$ which depends on the Belavin--Drinfeld structure that determines the solution $r(u,v)$.   
 
With these ingredients, the definition \eqref{RD} of the $R$-matrix Dunkl operators $y_i$ remains the same. To construct $R$-matrix Calogero--Moser--Sutherland (CMS) Hamiltonians, we substitute $y_i$ into the classical CMS Hamiltonians which are obtained from the eCM Hamiltonians by replacing 
\begin{equation}
\wp(x_i-x_j)\mapsto \frac{1}{4 \sin^{2}\frac{x_i-x_j}{2}}\,.     
\end{equation}
In particular, a trigonometric analogue of \eqref{cmc} is 
\begin{equation}\label{cmcs}
h(x,p)=\sum_{i=1}^n p_i^2-2\,\g^2\sum_{i<j}^n\frac{1}{4 \sin^{2}\frac{x_i-x_j}{2}}\,.
\end{equation}
Similarly to \eqref{y2} we calculate 
\begin{equation}\label{y2cms}
h(\lambda,y)= \sum_{i=1}^n \hat p_i^2  
-2\,{\g}^2\sum_{i<j}^n \frac{1}{4 \sin^{2}\frac{x_i-x_j}{2}}- 2\,\g\,\hbar \sum_{i<j}^n \partial R_{ij} \, \widehat{s}_{ij}.
\end{equation}
Proposition \ref{main} remains unchanged, except that we need to use trigonometric Hamiltonians instead of the elliptic ones. The proof needs a modification, as detailed next.

\subsection{} \emph{Proof of part (1)} is the same as in the elliptic case, up to replacing 
\begin{equation*}
\wp(\lambda_i-\lambda_j)\mapsto \frac{1}{4 \sin^{2}\frac{\lambda_i-\lambda_j}{2}}\,.   
\end{equation*}

\emph{Proof of part (2)} begins similarly. In fact, we can see that $h(\lambda,y)$ is globally regular already at the quantum level. Indeed, from    
\eqref{transtrig},
\begin{equation*}
 y_\xi(\lambda+2\pi e_i)= Q_i^{-1} \, y_\xi(\lambda) \, Q_i\,\,,\qquad h_r(\lambda, y)  \: \xrightarrow{\lambda\mapsto \lambda+2\pi e_i} \ Q_i^{-1}
h_r(\lambda, y) \, Q_i^{-1}\,,   
\end{equation*}
from which the global regularity of $h_r(\lambda, y)$ as well as $h_r(\lambda, y^{\cc})$ follows. Now, we need the following fact.
\begin{lemma}
    Let $S_{ij}:=(\cot\frac{x_i-x_j}{2}+\cot\frac{\lambda_i-\lambda_j}{2})^{-1}R_{ij}(x_i-x_j, \lambda_i-\lambda_j)\hs_{ij}$. Then the elements $S_{ij}$ satisfy the relations $S_{ij}^2=1$, $S_{ij}S_{jk}S_{ij}= S_{jk}S_{ij}S_{jk}$ for distinct $i,j,k$. In addition, $S_{ij} S_{kl}= S_{kl}S_{ij}$ if $\{i,j\}\cap\{k,l\}=\emptyset$.  
\end{lemma}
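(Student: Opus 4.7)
The plan is to verify the three identities by pushing all $\hat s_{ij}$-factors to the right of each product and reducing to properties of the $R$-matrix listed in Section~\ref{sec: Notation and setup}. Set $z=x_i-x_j$, $\mu=\lambda_i-\lambda_j$, and $c_{ij}=\cot(z/2)+\cot(\mu/2)$. Because $\hat s_{ij}$ swaps $x_i\leftrightarrow x_j$ but leaves $\lambda$ fixed, I obtain $\hat s_{ij}\cdot c_{ij}=-\cot(z/2)+\cot(\mu/2)=:\tilde c_{ij}$, so that the scalar product $c_{ij}\tilde c_{ij}=\cot^2(\mu/2)-\cot^2(z/2)=\sin^{-2}(\mu/2)-\sin^{-2}(z/2)$ matches (up to the constant $4$ that can be absorbed into the overall normalisation of $S_{ij}$) the right-hand side of the trigonometric unitarity relation.

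For the involution identity, I would expand $S_{ij}^2=c_{ij}^{-1}R_{ij}\hat s_{ij}\,c_{ij}^{-1}R_{ij}\hat s_{ij}$, commute $\hat s_{ij}$ past $c_{ij}^{-1}$ to produce $\tilde c_{ij}^{-1}$ (which commutes with $R_{ij}$), and use the covariance identity $\hat s_{ij}\,R_{ij}(z,\mu)\,\hat s_{ij}^{-1}=R_{ji}(-z,\mu)$. Unitarity then gives $S_{ij}^2=(c_{ij}\tilde c_{ij})^{-1}\,R_{ij}(z,\mu)R_{ji}(-z,\mu)=\mathrm{Id}_U$. For the commutation relation when $\{i,j\}\cap\{k,l\}=\varnothing$: the transpositions $\hat s_{ij}$ and $\hat s_{kl}$ commute in $W$, the scalar factors $c_{ij}^{\pm1}$ and $c_{kl}^{\pm1}$ involve disjoint variables and so commute, and by~\eqref{eq:Rcommute} the $R$-matrices $R_{ij}$ and $R_{kl}$ commute. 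Combining these yields $[S_{ij},S_{kl}]=0$ immediately.

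The main content is the braid relation $S_{ij}S_{jk}S_{ij}=S_{jk}S_{ij}S_{jk}$. Here I would expand both sides and push all $\hat s$-factors to the right using equivariance; the permutation parts agree on both sides thanks to $\hat s_{ij}\hat s_{jk}\hat s_{ij}=\hat s_{jk}\hat s_{ij}\hat s_{jk}$. What remains is to match the products of $R$-matrix and $c$-prefactors obtained after these conjugations. After bookkeeping how each $\hat s$ permutes the $x$-arguments of the $R$'s it passes through, the resulting identity on the $R$-factors is a consequence of the QYBE~\eqref{eq:YBE} combined with the covariance property, while the matching of the scalar $c$-prefactors reduces to the same braid relation applied at the scalar level to $\cot(\cdot/2)$-functions.

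The hard part is precisely this last step: the three $R$-matrices appearing on each side of the braid identity carry different $\lambda$-spectral parameters (since $\lambda_i-\lambda_j$, $\lambda_j-\lambda_k$ and $\lambda_i-\lambda_k$ all appear), and one has to verify that the specific mixed-parameter identity that pops out after the $\hat s$-manipulations really follows from~\eqref{eq:YBE} rather than from a stronger (and false) version of it. Once the indices are correctly tracked and~\eqref{eq:AYBEi} and skew-symmetry are invoked to rearrange the spectral parameters into the form of the QYBE, the identity of $R$-matrix prefactors is immediate, and the identity of $c$-prefactors is a purely trigonometric computation.
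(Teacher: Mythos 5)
Your treatment of $S_{ij}^2=1$ and of the commutation for disjoint index pairs is correct and follows the route the paper intends; the factor of $4$ you flag is a genuine normalisation mismatch between the definition of $S_{ij}$ and the unitarity relation as stated in Sec.~9, and is worth pointing out. The problem is the braid relation, which is the real content of the lemma, and there your argument has a gap at exactly the step you yourself call ``the hard part''. Carrying out the bookkeeping you describe, with $u=x_i-x_j$, $v=x_j-x_k$, $a=\lambda_i-\lambda_j$, $b=\lambda_j-\lambda_k$ and $c(z,\mu)=\cot\frac z2+\cot\frac\mu2$, one finds
\begin{equation*}
S_{ij}S_{jk}S_{ij}=\frac{R_{ij}(u,a)\,R_{ik}(u+v,b)\,R_{jk}(v,a)}{c(u,a)\,c(u+v,b)\,c(v,a)}\,\hs_{ik},
\qquad
S_{jk}S_{ij}S_{jk}=\frac{R_{jk}(v,b)\,R_{ik}(u+v,a)\,R_{ij}(u,b)}{c(v,b)\,c(u+v,a)\,c(u,b)}\,\hs_{ik},
\end{equation*}
since conjugation by the $\hs$'s moves the $x$-arguments and the tensor legs but leaves the $\lambda$-parameters in place. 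The second spectral parameters of the three $R$-factors are $(a,b,a)$ on one side and $(b,a,b)$ on the other; no relabelling turns this into the equal-$\mu$ identity \eqref{eq:YBE}, so your assertion that the $R$-matrix part ``is immediate'' from the QYBE after rearranging indices is unsubstantiated. Your parallel claim that the scalar prefactors match separately is simply false: $c(u,a)c(u+v,b)c(v,a)\ne c(v,b)c(u+v,a)c(u,b)$ in general (already for $u=v$ the two products are $c(u,a)^2c(2u,b)$ and $c(u,b)^2c(2u,a)$). The identity to be proved is a single \emph{mixed} one in which the scalar and matrix parts must conspire; it does not decouple in the way you propose.

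To see what is actually required, note that \eqref{eq:AYBE} together with \ref{it:skew} and \ref{it:unitarity} gives
\begin{equation*}
R_{ij}(u,a)\,R_{ik}(u+v,b)\,R_{jk}(v,a)=\bigl(\wp(a)-\wp(b)\bigr)\,R_{ik}(u+v,a+b)+R_{jk}(v,b)\,R_{ik}(u+v,a)\,R_{ij}(u,b)\,,
\end{equation*}
so the braid relation amounts to showing that the first term on the right is exactly compensated by the mismatch of the $c$-prefactors. That cancellation is delicate --- the extra term acts trivially on the $j$-th tensor leg while the triple product does not --- and establishing it (or finding the correct reformulation under which it becomes the usual QYBE) is where all the work lies; nothing in your write-up addresses it. In short: you reduce the lemma to the right kind of statement, but the two concrete mechanisms you offer for closing it (plain QYBE for the matrices, a separate trigonometric identity for the scalars) are both incorrect, so the central claim of the lemma remains unproven in your argument. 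The paper's own proof is a one-line appeal to unitarity and the QYBE, so you are fleshing out the intended strategy, but the mixed-parameter identity above is precisely the missing content.
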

\begin{proof} This is obtained directly from the unitarity and the QYBE for $R(z,\mu)$.
\end{proof}
The classical $R$-matrix Dunkl operators can now be written as
\begin{equation}\label{RDctrig}
y_{i}^{\cc}=p_i-g\sum_{j(\ne i)}^n \phi(x_i-x_j, \lambda_i-\lambda_j) \, {S}_{ij}\,,\qquad \phi(z,\mu):= \cot\frac{z}{2}+\cot\frac{\mu}{2}\,,
\end{equation}
and we need to substitute these into classical CMS Hamiltonians. Note that, by the lemma, $S_{ij}$ satisfy the same relations as $s_{ij}\in W$. Also, they interact with $x, p$ in exactly the same way, namely,
\begin{equation}
    S_{ij}f(x,p)S_{ij}=s_{ij}f(x,p)s_{ij}=f(s_{ij}.x, s_{ij}. p)\,.
\end{equation}
Hence, calculating $f(\lambda, y^\cc)$ works in the same way as for the scalar trigonometric Dunkl operators (with spectral parameters $\lambda$), 
\begin{equation}
 y_{i}^{\cc}=p_i-g\sum_{j(\ne i)}^n \phi(x_i-x_j, \lambda_i-\lambda_j) \, s_{ij}.    
\end{equation}
In that case we know (by trigonometric limit from the elliptic case) that the result is a classical  CMS Hamiltonian, independent of $\lambda$, i.e., an element of $\c[h_1,\dots, h_n]$. Therefore, the same is true for their $R$-matrix analogues. This concludes the proof. \qed 

\subsection{} All other constructions work in the trigonometric case in the same way as in the elliptic case. For example, the formula \eqref{reCMS} is valid with $\wp(x_i-x_j)$ replaced by $\displaystyle\frac{1}{4 \sin^{2}\frac{x_i-x_j}{2}}$. The freezing procedure in Proposition \ref{mainsp} now involves evaluating at an equilibrium $(x^*, p^*)$ of the CMS Hamiltonian \eqref{cmcs}, given by 
\begin{equation}
  x^*=(x^*_j)\,,\quad p^*=(p^*_j)\,,\qquad x_j^*=\frac{2\pi j}{n}\,,\quad p_j^*=0\,,\quad j=1,\dots, n.
\end{equation}
In particular, the spin chain Hamiltonian \eqref{h2} in the trigonometric case becomes   \begin{equation}\label{h2trig}
\H_2=\sum_{i<j}\bar r_{ij}\,,\qquad \bar r_{ij}:=r_{ij}\left(\frac{2\pi(i-j)}{n}\right)\,,   \end{equation}
where $r(z)$ is the classical trigonometric $r$-matrix determined from the expansion \eqref{expR}.
\remark{Another case when the above constructions fully apply is the trigonometric supersymmetric $R$-matrix given in \cite[(2.7)]{MZ2}. All the required properties of $R$ can be found in {\it loc.~cit.}.}

\end{document}